\documentclass[10pt]{article}

\usepackage{amssymb,latexsym,amsmath,amsthm}
\usepackage{graphicx}
\usepackage[shortlabels]{enumitem}
\usepackage{subcaption}
\usepackage{thm-restate}
\usepackage[numbers,square,comma]{natbib}
\usepackage[hidelinks]{hyperref}

\newtheorem{theorem}{Theorem}
\newtheorem{corollary}{Corollary}
\newtheorem{lemma}{Lemma}

\newtheorem{observation}{Observation}

\theoremstyle{definition}
\newtheorem{definition}{Definition}

\newtheorem{remark}{Remark}
\newtheorem{example}{Example}
\newtheorem{question}{Question}

\title{Universality of Infinite Chess}
\author{
Matthew Bolan%
\thanks{\begin{tabular}[t]{@{}l@{}}
Department of Mathematics, University of Toronto, Canada\\
\texttt{matthew.bolan@mail.utoronto.ca}
\end{tabular}}
\and
Andreas Tsevas%
\thanks{\begin{tabular}[t]{@{}l@{}}
Department of Physics, Ulm University, Germany\\
\texttt{andreas.tsevas@uni-ulm.de}
\end{tabular}}
}
\date{}

\begin{document}

\maketitle

\begin{abstract}
We prove that chess played on the infinite chessboard $\mathbb{Z}^2$ with infinitely many pieces is as powerful as it could possibly be, by showing that every open Gale-Stewart game with draws is strategically equivalent to some infinite chess position and vice versa. As our construction is computable and open Gale-Stewart games are well understood, this allows us to resolve many open questions about the complexity of infinite chess with infinitely many pieces. In particular, all countable ordinals arise as the game value of some such chess position. We also give an alternate construction that realizes all countable ordinals as game values, with the pleasing property that it consists only of the king pair and pawns.
\end{abstract}

\noindent
\emph{Keywords:} Infinite chess; Gale-Stewart games; Descriptive set theory; Combinatorial game theory.

\vspace{0.5em}
\noindent
\emph{Mathematics Subject Classification 2020:} 91A44, 91A05.

\section{Introduction}
\label{sec:intro}
    This article resolves several open questions regarding the game values and the complexity of infinite chess played with infinitely many pieces. Infinite chess is a natural extension of the rules of chess to allow play on the integer lattice $\mathbb Z^2$ \cite{brumvele2012mate, evans2013transfinite, evans2017position}. This extension is mostly standard: Queens, bishops, and rooks move in the same directions as in the finite case, but may now move any finite number of squares, since they are no longer confined by the edges of the chessboard. Kings, knights, and pawns move as expected, though there is no concept of pawn promotion or castling, and pawns are assumed never to be able to move two squares at once. Each player has exactly one king, infinite play is defined to be a draw, and the rules of check and checkmate, stalemate, threefold repetition, and the fifty-move rule\footnote{In the previous literature on infinite chess, draws by the fifty-move rule and threefold repetition are usually omitted, but we sidestep the question of whether to include them by producing positions whose outcome is not affected by these rules.} remain unchanged.

An infinite chess \emph{position} is given by an indicator of whose turn it is together with an assignment of a value to each square on the board indicating whether there is a piece there and, if so, which piece.

Infinite chess in its current form was popularized through questions on MathOverflow \cite{MO2011Checkmate, MO2010Decidable}. As was first discussed in the answers to \cite{MO2011Checkmate} and shown in \cite{evans2013transfinite}, there exist positions where White can always force mate in finite time, but Black can choose to delay the mate by any unbounded number of moves. This leads to the consideration of transfinite ordinal \emph{game values} (Definition~\ref{def:game_values}) which measure the distance White is from winning.

Although researchers have been able to construct positions with a game value of $\omega$ and all the way up to $\omega^4$ \cite{evans2017position}, it has been a longstanding open question how much higher these game values can go in infinite chess. In particular, Evans and Hamkins asked in
\cite{evans2013transfinite} what the supremum of the game values of all the winning positions for White is in infinite chess. This ordinal is referred to as \emph{the omega one of infinite chess} and is denoted $\omega_1^{\mathfrak{Ch}_{\hskip-2ex{\genfrac{}{}{0pt}{}{ }{\sim}}}}$. The only known upper bound for $\omega_1^{\mathfrak{Ch}_{\hskip-2ex{\genfrac{}{}{0pt}{}{ }{\sim}}}}$ is the observation that all game values of chess are countable i.e., $\omega_1^{\mathfrak{Ch}_{\hskip-2ex{\genfrac{}{}{0pt}{}{ }{\sim}}}} \leq \omega_1$, which follows from the fact that the game tree of infinite chess is only countably branching.

In this paper, we present two distinct constructions, both of which resolve this question and go beyond it. The first is presented in Section~\ref{sec:bishop-tree}. The proof strategy is to embed certain games on trees into the chessboard, similarly to the strategy used in \cite{evans2013transfinite} to show all countable game values arise in three-dimensional infinite chess. The primary obstacle we overcame in the two-dimensional case was how to allow branches of the tree to cross each other. This was achieved by representing the branches of our trees not as structures made of chess pieces but as empty diagonals along which bishops travel. The nature of our trees also allows us to treat the two players essentially identically, allowing us to build arbitrary game trees and so simulate all open Gale-Stewart games with draws (Definition~\ref{def:GSGame}).

The statement and various consequences of our results are defined in terms of descriptive set theory --- we refer the reader to Section IV of \cite{Piergiorgio1989} for any undefined terms and notations. This is a matter of formalization only --- a reader merely interested in the chess constructions should be able to follow and appreciate them without any background in set theory. Our first main theorem is as follows:
\begin{restatable}{thm}{mainthm}
    \label{thm:main}
    For any open Gale-Stewart game with draws $G_{S_1, S_2}$, there exists an infinite chess position $p$ having the following properties:
    \begin{enumerate}[(i)]

        \item The position $p$ can be computed given oracles for $S_1$ and $S_2$, and so is in $\Delta_1^{0,(S_1, S_2)}$.

        \item White (resp.\ Black) can force a win from $p$ if and only if player one (resp.\ two) can force a win in $G_{S_1, S_2}$. If $G_{S_1, S_2}$ has game value $\alpha$ for some player, then $p$ has game value $\beta \ge \alpha$ for the corresponding player.

        \item For any winning (resp.\ drawing) strategy $\sigma$ for some player in $p$, there exists a $\Delta_1^{0,\sigma}$ winning (resp.\ drawing) strategy for the corresponding player in $G_{S_1, S_2}$. For any winning (resp.\ drawing) strategy $\sigma$ for some player in $G_{S_1, S_2}$, there exists a $\Delta_1^{0,\sigma}$ winning (resp.\ drawing) strategy for the corresponding player in $p$.
        
    \end{enumerate}
\end{restatable}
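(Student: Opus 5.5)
We will prove Theorem~\ref{thm:main} by a direct gadget construction, as suggested in the introduction. The first step is a normalization of the game. Replacing the game tree of $G_{S_1,S_2}$ by an equivalent one in which every move is a binary choice, we may assume that a position of the game is a finite binary string $s$. We may also assume that $s\in S_1$ means player one has won at $s$, and $s\in S_2$ means player two has won at $s$. Each natural-number move is coded as a sequence of bits ended by a terminator, with the two players alternating between move-blocks. Long stretches in which only one player chooses bits are handled by inserting dummy moves for the other player. This recoding is computable, preserves the winner, and can only increase ordinal game values, since coding one move as several only stretches the distance to a win. The resulting game tree is therefore the full binary tree $2^{<\omega}$, labelled by the oracle sets.

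Next we embed $2^{<\omega}$ into $\mathbb{Z}^2$. Each node becomes a junction cell, and each edge becomes an empty diagonal along which a bishop travels; diagonals are allowed to cross one another. Every node $s$ is assigned a location computably from $s$. The pieces placed at a node depend only on whose turn it is there and on whether $s\in S_1$ or $s\in S_2$, so the position $p$ is computable from the oracles, which gives (i).

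At each node we build a local gadget. A bishop arriving at the junction of the player to move can continue along one of two outgoing diagonals, and this choice encodes the bit. All other moves must be forced:
\begin{enumerate}[(a)]
\item any deviation by the choosing player lets the opponent mate or gain a decisive advantage quickly;
\item the opponent's replies are forced, typically by checks, or are tempo moves in a locked pawn structure.
\end{enumerate}
At a node in $S_1$ (resp.\ $S_2$), the arriving bishop opens a line leading to a forced mate of Black (resp.\ White) in boundedly many moves. The rest of the board consists of locked pawns and a caged king for each side, so that neither side has any free play. Infinite play along a branch then corresponds to a draw, and the fifty-move and repetition rules never trigger, because the bishops always move forward into fresh territory.

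Given this setup, (ii) and (iii) follow by translating strategies in both directions. A strategy in the tree game induces a chess strategy: play the matching bishop route, and answer deviations with the punishment lines built into the gadget. Conversely, a chess strategy $\sigma$ induces a tree strategy by reading off which diagonal the bishop takes; this translation is clearly $\Delta_1^{0,\sigma}$. Game values can only grow, by induction on rank, since each tree move corresponds to at least one chess move.

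The main obstacle is the crossing of diagonals. We must show that when one bishop travels through a crossing, it creates no new threat to, or opportunity for, pieces on the crossed diagonal; for instance, it must not block a line or give a discovered check. We must also show that the infinitely many unused gadgets never interact with the live path. To handle this, we will place the crossings only on empty squares that are not attacked within the gadget. We will then verify that a bishop passing through a crossing exposes nothing, so that a local case analysis at each gadget type suffices.
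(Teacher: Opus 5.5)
Your first step fails. When a natural-number move is coded as a string of bits closed by a terminator, the player naming the number can simply never play the terminator. The resulting infinite play never completes another move of $G_{S_1,S_2}$, so unless the game was already decided it is a draw, whereas in $G_{S_1,S_2}$ every move is completed in one step. So the recoding does not preserve the winner. For example, if $S_1$ is the set of all sequences of length $2$ and $S_2=\varnothing$, player one wins $G_{S_1,S_2}$ whatever happens, yet in your binary game player two draws by never finishing his first move.

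This is not a repairable detail. In an open game on a finitely branching tree, consider the plays consistent with a winning strategy, cut off when the win is secured. They form a finitely branching tree with no infinite branch, hence a finite tree by K\"onig's lemma, so every winning position has finite game value. Now suppose your chess position faithfully simulates a binary game, with boundedly many chess moves per bit and deviations punished within boundedly many moves. Then its game values are finite too. Hence (ii) fails for every $G_{S_1,S_2}$ of transfinite game value, Corollary~\ref{cor:game-value} is out of reach, and your claim that the recoding ``can only increase ordinal game values'' is false.

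The missing idea is that each infinite choice of $G_{S_1,S_2}$ must be made in a single chess move, which is exactly what a long-range piece on an infinite board provides. The paper does no recoding. After the forced local sequence in a node, the released bishop must flee along its diagonal. It may stop on any of the possibly infinitely many guarded squares on that diagonal, each of which belongs to a child node. Nodes are placed on intersections $R_n\cap L_m$ so that each diagonal carries one parent and all of its children and is otherwise empty; this is also why crossings cause no trouble. The game is first pruned to its reduced game tree. A terminal node is then simply a diagonal with no safe square, which loses for the player whose bishop must flee, so no separate mating gadget at nodes of $S_1$ or $S_2$ is needed. Once the simulation has this form, your translation of strategies for (ii) and (iii) goes through essentially as you describe.

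A smaller error: a bishop moving into fresh territory rules out repetition but does not reset the fifty-move counter. The paper relies instead on the pawn moves and captures in each node's forced line.
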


As infinite chess itself turns out to be such a Gale-Stewart game with draws, infinite chess with infinitely many pieces is as powerful and as strategically complex as it could possibly be. In particular, we immediately obtain the following corollary, resolving the question of the omega one of infinite chess:
\begin{corollary}
    \label{cor:game-value}
    For every countable ordinal $\alpha<\omega_1$, there exists an infinite chess position with infinitely many pieces with game value $\alpha$.
\end{corollary}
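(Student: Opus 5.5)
The plan is to combine Theorem~\ref{thm:main} with two standard facts. First, every countable ordinal is the game value of some open Gale--Stewart game. Second, the set of game values realized by positions reachable from a given winning position is downward closed. Theorem~\ref{thm:main}(ii) only yields a chess position $p$ of value $\beta \ge \alpha$, so the second fact is what lets us extract a position of value exactly $\alpha$.

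\textbf{Step 1: a Gale--Stewart game of value at least $\alpha$.} Fix a countable $\alpha$ and a bijection $e:\mathbb N\to\alpha$ (or $\alpha+1$ if $\alpha$ is finite). The game is a countdown.
\begin{itemize}
\item The state is an ordinal $\gamma$, starting at $\gamma=\alpha$.
\item Player two names some $n$ with $e(n)<\gamma$, and the state becomes $e(n)$.
\item Player one's moves are irrelevant passes.
\item Player one wins as soon as the state is $0$, or as soon as player two names an illegal $n$.
\end{itemize}
The winning set for player one is open, since it is determined at a finite stage. The set $S_2$ is empty, so player two never wins, but this is allowed. The sequence of states is strictly decreasing, so every play reaches $0$ in finitely many steps and player one always wins. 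A routine induction on $\gamma$ shows that the position with state $\gamma$ has value at least $\gamma$, up to a fixed finite or affine offset depending on the paper's exact convention in Definition~\ref{def:game_values}. Hence $G$ has value at least $\alpha$ for player one. Applying Theorem~\ref{thm:main} gives a chess position $p$ from which White wins with some value $\beta\ge\alpha$.

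\textbf{Step 2: downward closure.} I will prove by induction on $\beta$ the following claim: if a position $q$ is winning for White with value $\beta$, then for every $\gamma\le\beta$ some position reachable from $q$ by legal play has value exactly $\gamma$. Any position reachable from a legal infinite chess position is itself a legal position, so the corollary follows by applying the claim to $p$ with $\gamma=\alpha$.

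I will use the standard recursion for game values: a White-to-move position has value $\min(\text{children})+1$ over White's winning moves, and a Black-to-move position has value $\sup$ of its children. In all cases below, $\gamma=\beta$ is witnessed by $q$ itself, so assume $\gamma<\beta$.
\begin{itemize}
\item \emph{White to move.} Then $\beta=\delta+1$, where $\delta$ is the value of an optimal child. Since $\gamma\le\delta<\beta$, the induction hypothesis applied to that child gives the required position.
\item \emph{Black to move, some child has value $\delta_i$ with $\gamma\le\delta_i<\beta$.} Apply the induction hypothesis to that child.
\item \emph{Black to move, no such child.} Since $\gamma<\beta=\sup_i\delta_i$, some child has value $\delta_i\ge\gamma$, so this child has value exactly $\beta$. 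This happens only when the supremum is attained, so $\beta$ is not a limit of smaller child values. The child is White-to-move, so we fall into the first case one level down without increasing $\beta$. The recursion terminates because the White-to-move case strictly lowers the ordinal.
\end{itemize}
A cleaner way to organize this is a lexicographic induction on pairs (value, side to move).

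\textbf{Main obstacle.} The main risk is matching the exact conventions of Definition~\ref{def:game_values}, for example whether the terminal checkmate has value $0$ or $1$, and whether Black-to-move values are sups of children or of children $+1$. Under any of the usual conventions, the induction in Step~2 goes through with trivial modifications. Under any of them, every ordinal $\le\beta$ occurs as the value of some position. One point needs care: a position's value may shift by one depending on who is to move. This causes no problem, because the claim asks only for some reachable position with value exactly $\alpha$, not for a position with a prescribed side to move.
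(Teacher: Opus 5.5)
Your proposal is correct and is essentially the argument the paper leaves implicit when it calls the corollary immediate. You produce an open game of value at least $\alpha$, transfer it via Theorem~\ref{thm:main}(ii) to a chess position of value $\beta\ge\alpha$, and then descend to a position of value exactly $\alpha$ by the same ``simple induction'' the paper invokes at the end of the proof of Theorem~\ref{thm:game-values-pawns}. There is one cosmetic slip: for finite $\alpha$ there is no bijection from $\mathbb N$ onto $\alpha$ or $\alpha+1$, so either use a surjection or descend from any game of value $\ge\omega$, which also covers $\alpha=0$. Also, under Definition~\ref{def:game_values} your countdown game started at state $\gamma$ has value exactly $\gamma$ for finite $\gamma\ge 1$ and $\gamma+1$ for infinite $\gamma$, so the hedging about offsets is unnecessary.
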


We remark that by the construction's computable nature, the same argument restricted to games with $S_1$ and $S_2$ computable shows that the omega one of computable chess positions is the Church-Kleene ordinal $\omega_1^{\text{CK}}$, where the upper bound was proven in \cite{evans2013transfinite}.

Further, many classical results about the complexity of Gale-Stewart games apply to infinite chess. In particular, Theorem 2 from Andreas Blass' 1972 paper `Complexity of winning strategies' \cite{blass1972strategies} immediately yields the following corollary to our main theorem:

\begin{corollary}
    For any hyperarithmetic set $A$ there exists a computable infinite chess position $p$ such that $A$ is recursive in every winning strategy for the associated chess game.
\end{corollary}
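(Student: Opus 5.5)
The plan is to combine Blass's theorem with Theorem~\ref{thm:main}, specialised to computable $S_1,S_2$, and to use part~(iii) to pull the Turing-degree lower bound back from the chess position to the Gale-Stewart game.

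First I take Blass's Theorem 2 from \cite{blass1972strategies} as given. For a hyperarithmetic set $A$ it supplies a computable open game: a computable set $S$ of finite positions such that player one wins a play exactly when it passes through $S$. In this game player one has a winning strategy, and $A$ is recursive in every winning strategy for player one. I recast it as an open Gale-Stewart game with draws $G_{S_1,S_2}$ by setting $S_1=S$ and $S_2=\emptyset$. Then the plays won by player one are the same as before. The only change is that infinite plays, which Blass's game counts as wins for player two, now count as draws.

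A strategy for player one is winning in $G_{S_1,S_2}$ if and only if every play consistent with it reaches $S_1$. That is exactly the winning condition for player one in Blass's game. So the two games have the same set of winning strategies for player one, and $A$ is recursive in each of them. This recasting is the step I must handle with some care. One must check that Blass's games can be taken open for the winner and that the encoding of positions as finite sequences matches Definition~\ref{def:GSGame}. Both points should be routine.

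Next I apply Theorem~\ref{thm:main} to $G_{S_1,S_2}$ and obtain a position $p$.
\begin{enumerate}[(a)]
\item By part~(i), $p$ is computable relative to $S_1\oplus S_2$. Both sets are computable, so $p$ is a computable position.
\item By part~(ii), White can force a win from $p$, since player one can force a win in $G_{S_1,S_2}$. Hence the statement is not vacuous.
\item Now let $\sigma$ be any winning strategy for White in $p$. By part~(iii), there is a winning strategy $\tau$ for player one in $G_{S_1,S_2}$ with $\tau\in\Delta_1^{0,\sigma}$, that is, $\tau\le_T\sigma$. Since $A\le_T\tau$, transitivity gives $A\le_T\sigma$.
\end{enumerate}
This holds for every winning strategy of the associated chess game, which completes the argument.
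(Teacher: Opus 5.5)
Your proposal is correct and follows the paper's route exactly. The paper's argument is the single remark that Blass's Theorem 2 (a computable open game in which player one wins and every winning strategy computes $A$) combined with parts (i) and (iii) of the main theorem gives the corollary, and your recasting with $S_2=\emptyset$ followed by $A\le_T\tau\le_T\sigma$ is precisely the unpacking of that remark.
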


Additionally, by the construction in the proof of Theorem 3 of the aforementioned paper, we obtain the following corollary:
\begin{corollary}
\label{cor:hyper}
    There exists an infinite chess position $p$ with the following properties:
    \begin{enumerate}[(i)]
        \item The position $p$ is computable.
        \item White has a computable winning strategy defeating every hyperarithmetic strategy for Black.
        \item Allowing arbitrary play, neither player has a winning strategy, and both players can force a draw.
    \end{enumerate}
\end{corollary}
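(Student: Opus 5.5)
This corollary follows from the main theorem applied to Blass's game; no new chess construction is needed. In the proof of Theorem~3 of \cite{blass1972strategies}, Blass builds a computable open game (with draws) $G_{S_1,S_2}$, where $S_1$ and $S_2$ are computable, and shows three things. First, player one has a computable strategy that defeats every hyperarithmetic strategy of player two. Second, neither player has a winning strategy when arbitrary strategies are allowed. Third, each player can force at least a draw. The plan is to recall this game, make sure it fits the format of Definition~\ref{def:GSGame} (recasting it as an open game with draws if Blass phrases it differently), and then let $p$ be the position that Theorem~\ref{thm:main} produces for $G_{S_1,S_2}$.

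Property (i) comes straight from Theorem~\ref{thm:main}(i). Since $S_1$ and $S_2$ are computable, $p$ is computable relative to computable oracles, so $p$ is computable. Property (iii) comes from Theorem~\ref{thm:main}(ii) and (iii). By (ii), White has a winning strategy in $p$ if and only if player one has one in $G_{S_1,S_2}$, and likewise for Black and player two; neither has one in the game, so neither has one in $p$. By (iii), each drawing strategy in $G_{S_1,S_2}$ transfers to a drawing strategy for the corresponding colour in $p$, so both players can force a draw.

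Property (ii) needs more than the literal statement of Theorem~\ref{thm:main}, and this is the main obstacle. Item (iii) of the theorem only transfers \emph{winning} strategies, and a strategy that merely beats every hyperarithmetic opponent is not winning. So I would open up the construction and use the form the translation actually takes. Positions of $p$ correspond to positions of $G_{S_1,S_2}$, and the translation maps are uniformly computable from $S_1$ and $S_2$, hence computable here. Concretely, I would extract two statements from the proof of the theorem:
\begin{enumerate}[(a)]
\item A computable procedure turns player one's computable strategy $\tau$ into a White strategy $\tau'$ in $p$.
\item A computable procedure turns any Black strategy $\rho$ in $p$ into a player-two strategy $\rho^*\le_T\rho$ such that whenever $\tau$ beats $\rho^*$, $\tau'$ beats $\rho$.
\end{enumerate}
For (b), Black moves that leave the simulation are answered by White's winning punishments, and the simulated play is decoded from the chess play. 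Given these, let $\rho$ be a hyperarithmetic Black strategy. Then $\rho^*$ is hyperarithmetic, since it is computable from $\rho$. Blass's $\tau$ therefore beats $\rho^*$, and so $\tau'$ beats $\rho$.

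The delicate point is that the reduction in (b) must be uniform and pointwise: it must hold opponent by opponent, not only for winning strategies. I expect the construction to give exactly this, because (iii) of the main theorem is presumably proved by this same play-by-play translation. If it does not, the fallback is to check that the chess gadgets punish every deviation from the simulation with a forced win for the non-deviating player. In that case any Black deviation simply loses to $\tau'$, which still establishes (ii).
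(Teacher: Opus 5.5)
Your proposal is correct and follows the paper's route: the paper gets this corollary simply by applying Theorem~\ref{thm:main} to the game from the proof of Blass's Theorem~3. Your opponent-by-opponent translation ($\rho\mapsto\rho^*\le_T\rho$, with Black's deviations punished by $\tau'$) supplies the justification for (ii) that the paper leaves implicit, since item (iii) of the theorem as stated only transfers winning and drawing strategies. The recasting you mention is genuinely needed, because a two-outcome open game is determined and so cannot satisfy (iii) of the corollary. Concretely, make the holder of Blass's computable strategy player one (prepending a dummy move if needed), let $S_1$ be his open payoff set, and set $S_2=\varnothing$; this turns the opponent's non-hyperarithmetic winning strategy into a drawing strategy.
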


These results pertain to the standard formulation of infinite chess in which queens, rooks, and bishops can move arbitrarily large distances. However, there is also a variant of the game in which all pieces are restricted to a maximum movement distance of seven squares per move, matching the largest move possible on the standard $8\times 8$ board. This version is likely the first to appear in the literature, having been considered by Dénes Kőnig in his 1927 paper about his eponymous lemma \cite{konig}.

Our second construction, presented in Section~\ref{sec:pawn-tree}, addresses this variant. The compactness argument used to prove our Theorem~\ref{thm:short-range-zugzwang} shows that transfinite game values under this strong restriction can only arise via the zugzwang\footnote{Zugzwang is the German word for `move compulsion'. In chess parlance it denotes the phenomenon of a player being put at a disadvantage due to their obligation to make a move.} phenomenon of chess, which makes the simulation of arbitrary games difficult. Nonetheless, we are able to obtain the following result:

\begin{restatable}{thm2}{secondthm}
\label{thm:game-values-pawns}
For every countable ordinal $\alpha<\omega_1$, there exists an infinite chess position with game value $\alpha$ consisting only of the king pair and pawns.
\end{restatable}

This is proven by simulating a small class of games (Definition~\ref{def:choosing_from_z_game}) which take full advantage of zugzwang, attain arbitrarily large countable game values, and can be realized on the chessboard as a binary tree made of pawn chains.

\section*{Acknowledgments}
We wish to thank the founders and maintainers of MathOverflow for enabling the initial discussions of infinite chess which led to this work. We also wish to thank Joel David Hamkins and Davide Leonessi for their organization of the Infinite Games Workshop in 2023, where we were able to present and discuss preliminary results on infinite chess. We are also grateful to Hunter Spink for his comments on an early version of this manuscript. 

Matthew Bolan acknowledges and is grateful for the support of an Ontario Graduate Scholarship.

\section{Simulating Arbitrary Games in Infinite Chess}
\label{sec:bishop-tree}
In this section we describe our first construction, proving Theorem~\ref{thm:main}. Section~\ref{sec:open-games-with-draws} is devoted to various technical definitions related to the type of games we simulate in infinite chess, while Section~\ref{sec:bishop-tree-chess} deals with the construction of the actual positions. The reader only interested in the chess can skip to Section~\ref{sec:bishop-tree-chess} and still get a strong idea of the power of the construction.

\subsection{Open Gale-Stewart Games with Draws}
\label{sec:open-games-with-draws}

We first introduce the class of games that we wish to simulate. For a set $S \subseteq \mathbb N ^{< \omega}$ of finite sequences of natural numbers, we define the set $U(S) \subseteq \mathbb N ^{\omega}$ to be the set of all infinite sequences of naturals having some initial segment belonging to $S$.

\begin{definition}[\emph{Open Gale-Stewart game with draws}]
\label{def:GSGame}
Let $S_1,S_2\subseteq\mathbb N^{< \omega}$ be such that $U(S_1)\cap U(S_2) = \varnothing$. We define the \emph{open Gale-Stewart game with draws} $G_{S_1, S_2}$ as follows: Two players take turns naming elements of $\mathbb N$, proceeding endlessly so that a sequence $s \in \mathbb{N}^\omega$ is produced. We say the first player wins $G_{S_1, S_2}$ if $s\in U(S_1)$, the second player wins if $s\in U(S_2)$, and otherwise the game is drawn. The sets $U(S_1)$ and $U(S_2)$ are called the payoff sets.
\end{definition}

Definition~\ref{def:GSGame} is a generalization of open Gale-Stewart games, which were first defined in \cite{GaleStewart1953}, to allow for three possible outcomes: wins, losses, and draws. The game $G_{S_1, S_2}$ is a Gale-Stewart game in the classical sense precisely when $U(S_1) \cup U(S_2) = {\mathbb N}^{\omega}$, meaning no sequence allows for a draw. We define the payoff sets in terms of $S_1$ and $S_2$ instead of using $U(S_1)$ and $U(S_2)$ directly so that statements about our construction, game values, and computability results are easier to state.

The reasoning for the terminology \emph{open} is that $U(S_1)$ and $U(S_2)$ are open in the product topology on $\mathbb{N}^\omega$, and all open sets in the product topology are of the form $U(S)$ for some $S \subseteq \mathbb N^{< \omega}$. In the literature, more general Gale-Stewart games are considered which permit payoff sets that are not open. If a player in an open Gale-Stewart game with draws has won, then by the definition of the product topology there was some finite time at which the win was guaranteed regardless of how either player chose to continue. With our particular definition, one can check if some player is guaranteed to win by checking if any initial segment of the named sequence belongs to $S_1$ or $S_2$.

A \emph{strategy} in a Gale-Stewart game with draws is a map $\sigma: \mathbb N^{< \omega} \to \mathbb N$ from finite sequences of naturals to the naturals. A player follows the strategy $\sigma$ by playing $\sigma(v_1,v_2,\ldots,v_n)$ if it is their turn after the finite sequence of naturals $(v_1,v_2,\ldots,v_n)$ has been played\footnote{Note that this definition assigns a move to every finite sequence, even those which can never arise and those at which the player is not to move. These values are immaterial and play no role in the execution of the strategy.}. Strategies are winning (resp.\ drawing) if they win (resp.\ draw or win) against all other strategies.

There is a concept coming from infinitary game theory, which assigns to open games an ordinal measuring the winning player's time until victory. We define this now for open Gale-Stewart games with draws, following \cite{evans2013transfinite}.
\begin{definition}[\emph{Ordinal game values}] \label{def:game_values}
    Consider an open Gale-Stewart game with draws $G_{S_1, S_2}$. Given some initial play in that game specified by a finite sequence of naturals $v = (v_1,\ldots,v_n)$, we define the \emph{ordinal game value of $v$ for player one} inductively as follows: If player one has already guaranteed his victory in the game i.e., if there exists some natural number $k\leq n$ such that $(v_1,\ldots,v_k)\in S_1$, then $v$ has game value $0$. If it is the turn of player two, we say $v$ has game value $\alpha$ if for every $m \in\mathbb N$ the play $(v_1,\ldots,v_n,m)$ has already been assigned game value $\alpha_m \le \alpha$ and $\alpha = \sup_{m \in \mathbb N} \{\alpha_m\}$. If it is the turn of player one we say $v$ has game value $\alpha$ if the set $T_{v, \alpha}$ of naturals $m$ such that the play $(v_1,\ldots,v_n,m)$ has already been assigned game value $\alpha_m < \alpha$ is non-empty and $\alpha = \min_{m \in T_{v, \alpha}} \{ \alpha_m \} + 1$. The ordinal game values for player two are defined analogously, with the roles of the players and payoff sets interchanged.
\end{definition}
The fundamental observation about game values of open games is that the plays having an ordinal game value for player one are precisely the plays that are winning for player one. In fact, player one has a winning strategy by always choosing some move leading to a smaller game value.

Any given infinite chess position can be interpreted as an open Gale-Stewart game with draws. To do this, we fix some encoding of chess moves as natural numbers, declare illegal moves to be an instant loss, and allow any natural number as a move after the corresponding chess game would have ended. The resulting game is open, as checkmate can only occur after finite play and infinitely long play on the chessboard is a draw. The set $S_1$ can then be taken to be those finite sequences of play which result in Black being checkmated or Black having made an illegal move, and $S_2$ the analogous set for White. In this way, infinite chess positions can be assigned game values. If a position has a finite game value $n$ for White, then this notion agrees with the notion of White having mate-in-$n$ common to chess puzzles \cite{evans2013transfinite}.

Our main theorem is that infinite chess can simulate any open Gale-Stewart game with draws:

\mainthm*

The position $p$ will essentially be a simple embedding of the important parts of the game tree of $G_{S_1, S_2}$ on the chessboard. Moves in $G_{S_1, S_2}$ will correspond in a computable way to certain bishop moves in $p$, at least until checkmate occurs.

Our construction must address the fact that Gale-Stewart games take infinitely long to play, but infinite play in chess is defined as a draw. Thus, any chess position simulating an open Gale-Stewart game must at some point allow a player who has guaranteed himself victory in the corresponding Gale-Stewart game to checkmate the opponent. We introduce the following definitions to pinpoint which moves by the losing player in a Gale-Stewart game do not further affect its outcome.
\begin{definition}[\emph{Nodes and losing nodes}] \label{def:losing-nodes}
    Consider an open Gale-Stewart game with draws $G_{S_1, S_2}$ and some initial play in that game specified by a finite sequence of naturals $v = (v_1,\ldots,v_n) \in {\mathbb N}^n$ for $n\in\mathbb N$. The play $v$ is called a \emph{node} of $G_{S_1, S_2}$. Further, $v$ is called \emph{losing for player one} if it is the turn of player one (i.e., $n$ is even) and some initial segment of $v$ belongs to $S_2$. Likewise, $v$ is called \emph{losing for player two} if it is the turn of player two (i.e., $n$ is odd) and some initial segment of $v$ belongs to $S_1$.
\end{definition}

\begin{definition}[\emph{Reduced game tree}] \label{def:reduced-game-tree}
    Consider an open Gale-Stewart game with draws $G_{S_1, S_2}$. Then its \emph{reduced game tree} $T \subseteq \mathbb N^{<\omega}$ consists of precisely those nodes $v\in\mathbb N^{<\omega}$ such that $v$ has no proper initial segment that is a losing node for either player in the game.
\end{definition}

The reduced game tree $T$ of an open Gale-Stewart game with draws can be viewed as the tree containing precisely those moves which are necessary to play the game to its conclusion such that terminal nodes always correspond to losing for the player to move. We record some properties of the reduced game tree that immediately follow from the definitions above:

\begin{observation} \label{obs:reduced-game-tree-prop}
    Given an open Gale-Stewart game with draws $G_{S_1, S_2}$ and its reduced game tree $T$, the following hold:

    \begin{enumerate}[(i)]
        \item The tree $T$ can be computed given oracles for $S_1$ and $S_2$, and so is in $\Delta_1^{0,(S_1, S_2)}$.
        
        \item If $v\in {\mathbb N}^{<\omega}$ is a terminal node of $T$, then $v$ is a losing node of $G_{S_1, S_2}$. In particular, any continuation of the game after $v$ results in a loss for the player whose turn it was after the initial play $v$.
        
        \item If $v\in {\mathbb N}^{\omega}$ is an infinite branch of $T$, then $v$ is a draw in $G_{S_1, S_2}$.
    \end{enumerate}
\end{observation}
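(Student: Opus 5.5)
We verify the three items of Observation~\ref{obs:reduced-game-tree-prop} separately, using only Definitions~\ref{def:GSGame}, \ref{def:losing-nodes} and \ref{def:reduced-game-tree}. The key preliminary fact is that the relevant properties are decidable relative to $S_1$ and $S_2$. Whether a finite $v=(v_1,\ldots,v_n)$ is losing for some player is decided by checking the parity of $n$ together with the finitely many initial segments of $v$ for membership in $S_1$ or $S_2$. Likewise, whether $v\in T$ is decided by running this test on each of the finitely many proper initial segments of $v$.

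\textbf{Item (i).} The preliminary fact gives a uniform algorithm with oracles $S_1,S_2$ deciding membership in $T$. Hence $T$ is computable in $(S_1,S_2)$, so $T\in\Delta_1^{0,(S_1,S_2)}$.

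\textbf{Item (ii).} First, $T$ is closed under initial segments. A proper initial segment of a proper initial segment of $v$ is itself a proper initial segment of $v$, so if $v$ has no losing proper initial segment, neither does any initial segment of $v$. This also makes $T$ a tree.

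Now let $v\in T$ be terminal, meaning no one-step extension $v^\frown m$ lies in $T$. Since $v\in T$, no proper initial segment of $v$ is losing. Since $v^\frown m\notin T$, some proper initial segment of $v^\frown m$ is losing, and that segment must therefore be $v$ itself. So $v$ is a losing node.

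For the ``in particular'' clause, suppose without loss of generality that $v$ is losing for player one. Then player one is to move and some initial segment $u$ of $v$ lies in $S_2$. Every infinite continuation $s$ of $v$ has $u$ as an initial segment, so $s\in U(S_2)$ and player two wins. Disjointness $U(S_1)\cap U(S_2)=\varnothing$ ensures this is a win for player two and not simultaneously a win for player one.

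\textbf{Item (iii).} Let $s$ be an infinite branch, so every finite initial segment of $s$ lies in $T$. Suppose for contradiction that $s\in U(S_1)$, witnessed by $s{\restriction}k\in S_1$.

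\begin{itemize}
\item If $k$ is odd, then $s{\restriction}k$ is itself losing for player two.
\item If $k$ is even, then $s{\restriction}(k+1)$ is losing for player two, since it has odd length and contains the initial segment $s{\restriction}k\in S_1$.
\end{itemize}

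In either case some finite initial segment $w$ of $s$ is losing. Then $w$ is a proper initial segment of the longer initial segment $s{\restriction}(|w|+1)$, so $s{\restriction}(|w|+1)\notin T$, contradicting that $s$ is a branch. The case $s\in U(S_2)$ is symmetric, so $s$ is a draw.

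The only subtle point is this parity adjustment in (iii): a segment in $S_1$ of the wrong parity is not itself losing, but its one-step extension is. Everything else is bookkeeping.
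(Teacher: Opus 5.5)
Your proposal is correct and follows the same approach as the paper, which only asserts that these properties ``immediately follow from the definitions.'' Your argument carries out that verification directly, including the one point needing care: the one-step parity shift in (iii).
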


Thus, an open Gale-Stewart game with draws can be recast as the players alternately making moves on its reduced game tree $T$ by moving deeper into the tree, with both of them aiming to force their opponent onto a terminal node.

\subsection{Infinite Chess Construction}
\label{sec:bishop-tree-chess}

We now explicitly outline the embedding of arbitrary open Gale-Stewart games with draws $G_{S_1, S_2}$ into the infinite chessboard. To do this, we build a chess position corresponding to its reduced game tree $T$. To that end, we introduce several small chess components along the way, which can be assembled to represent the nodes of the reduced game tree $T$ on the chessboard.

\begin{figure}[!ht]
    \centering
    \includegraphics[width=0.6\linewidth]{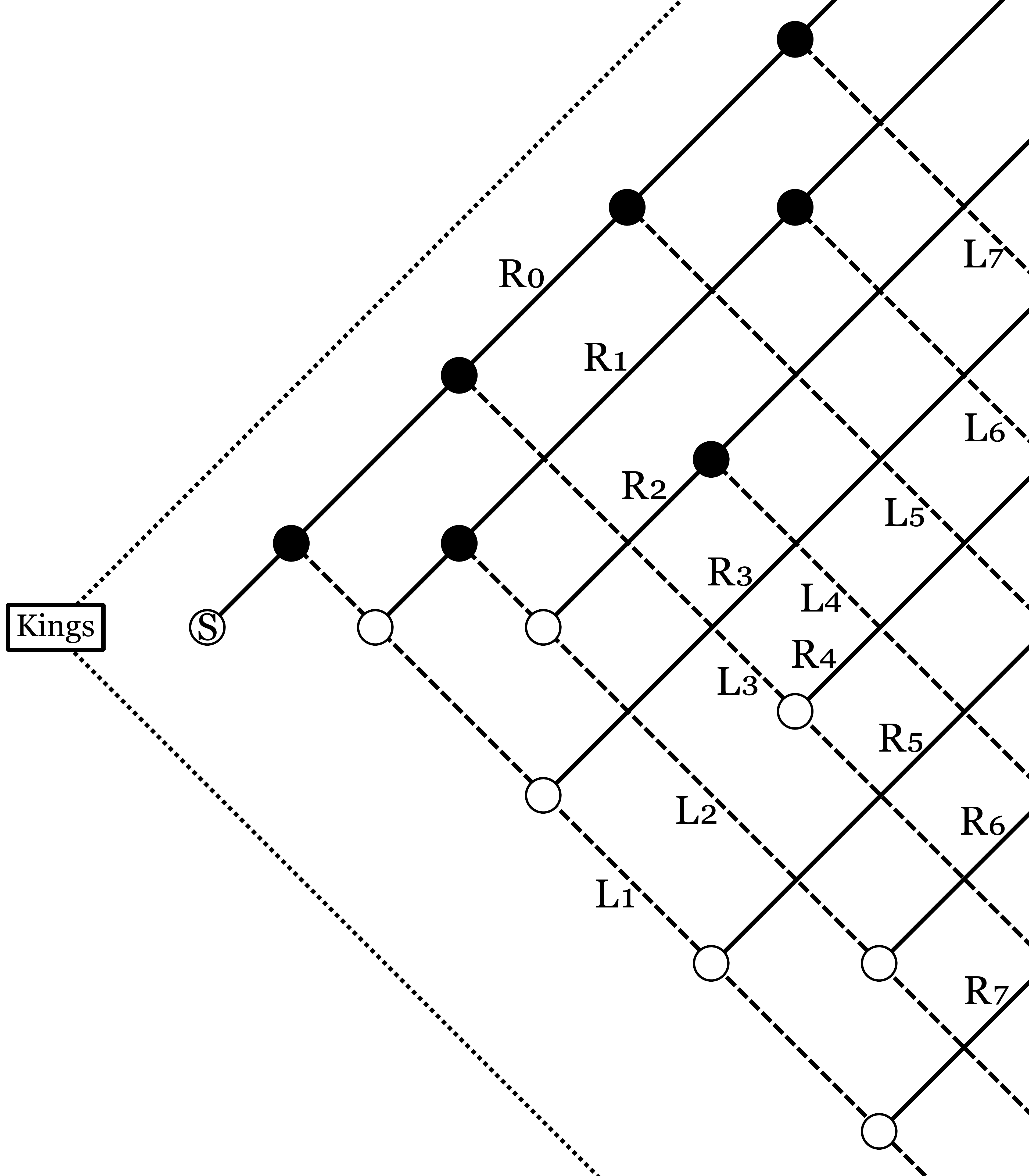}
    \caption{Global structure of an infinite chess position corresponding to an open Gale-Stewart game with draws. White circles are white nodes and black circles are black nodes. The leftmost white circle marked S is the root node of the tree. Diagonals indicate possible bishop paths through the tree, which are allowed to pass through nodes.}
    \label{fig:bishop-tree-structure}
\end{figure}

Before introducing any chess components, we give a high level overview of the construction. Figure~\ref{fig:bishop-tree-structure} illustrates the global structure of the chess position corresponding to some open Gale-Stewart game with draws $G_{S_1, S_2}$. The black and white kings are trapped on the very left of the position where they can be checkmated by bishop attacks along the dotted checkmate diagonals. To the right of them, a bi-colored tree corresponding to the reduced game tree $T$ of the game is embedded on the chessboard. 

Both players will be extremely limited in their movement options and will each only control a single free bishop at a time. The bishops are chasing each other through the tree with the goal of being the first to escape it and checkmate their opponent's vulnerable king. The bishops start off in the leftmost node, the white root node of the tree marked with an `S'. The position we will construct forces White to move their bishop towards the top right along the solid line diagonal $R_0$ to one of the potentially infinitely many black nodes along that line, which shall correspond to player one's first move in the Gale-Stewart game. Black will be forced to respond by following and capturing the white bishop, which kicks off a series of local moves in the black node, ending with Black being forced to leave the node toward the bottom right along a dashed diagonal to a white node of his choice. We shall see that this will correspond to player two's first move in the Gale-Stewart game. At the newly reached white node, local play will ensue and White will be the next player to make a choice, this time toward the top right along the corresponding solid diagonal to a black node of his choice. This alternation between white choices along solid diagonals to the top right and black choices along dashed diagonals to the bottom right continues, perhaps indefinitely, luring the players deeper and deeper into the tree. In general, the game will end if and only if a player finds himself at a node with no children. That player loses the game by not being able to choose a successor node of the opposite color, which will correspond to having reached a terminal node of the reduced game tree of the Gale-Stewart game.

To describe the specific positions of all nodes of a full bi-colored tree $\mathbb N^{< \omega}$ on the chessboard, we let $k$ be a parameter\footnote{In practice, we will choose $k=50$, as shown in Figure~\ref{fig:infinite-game-tree} later on.} representing the spacing between adjacent parallel diagonals. For $n \in \mathbb Z$, let $R_n = \{(x,y)|x-y=kn\}$ be the $n$th Northeast facing diagonal, and likewise $L_n = \{(x,y)|x+y=kn\}$ the $n$th Southeast facing diagonal. We build the start node on the intersection of $R_0$ and $L_0$, and the king chambers on the intersection of $R_{-1}$ and $L_{-1}$. If $n \ge 0$, $m \ge 0$, and $m \equiv 2^n \pmod {2^{n+1}}$ then we build a black node on the intersection of $R_n$ and $L_m$, and likewise if $n > 0$, $m \ge 0$, and $m \equiv 2^{n-1} \pmod {2^{n}}$ then we build a white node on the intersection of $R_m$ and $L_n$. We now record some facts which we will need in our arguments for the correctness of the local components:

\begin{observation}
\label{obs:global-properties}
For $k$ large enough, the layout described above and depicted in Figure~\ref{fig:bishop-tree-structure} fits on the chessboard and enjoys the following properties:
    \begin{enumerate}[(i)] 
        \item For each $n \ge 0$ there is exactly one white or start node on each $R_n$. Similarly, for each $n > 0$ there is exactly one black node on each $L_n$.
        \item For any $n\ge 0$ the black nodes on $R_n$ are each to the Northeast of the unique white or start node on $R_n$. Likewise, for $n \ge 1$ the white nodes on $L_n$ are to the Southeast of the unique black node on $L_n$. \label{obs:global-properties-clear-paths-behind} 
        \item For large enough $k$, the only pieces within $20$ squares of a diagonal labeled $R_n$ or $L_n$ belong to components explicitly placed on these diagonals by the construction. \label{obs:global-properties-wiggle-room}
\end{enumerate}
\end{observation}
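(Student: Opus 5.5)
The plan is to place everything in explicit coordinates and reduce each claim to elementary $2$-adic arithmetic, with one geometric estimate at the end. The intersection of $R_a$ and $L_b$ is the point $\bigl(k(a+b)/2,\,k(b-a)/2\bigr)$, which is a lattice point once $k$ is even. The start node is at $(0,0)$ and the king chambers are at $(-k,0)$. A black node sits at the pair $(a,b)=(n,m)$ with $n\ge 0$ and $m\equiv 2^n \pmod{2^{n+1}}$, i.e.\ $v_2(m)=n$, where $v_2$ is the $2$-adic valuation and $m>0$. A white node sits at $(a,b)=(m,n)$ with $n>0$ and $v_2(m)=n-1$, again forcing $m>0$. "Fits on the chessboard" then means two things. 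First, distinct components sit at distinct index pairs; second, each component has a bounded radius $r$ independent of $k$, so choosing $k$ large relative to $r$ keeps components disjoint.

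First I check that no pair $(a,b)$ carries two components. A black and a white node at the same pair would need $v_2(b)=a$ and $v_2(a)=b-1$, with $a,b>0$. Then $b\ge 2^a>a$ and $a\ge 2^{b-1}\ge b$, a contradiction. The pair $(0,0)$ is excluded for both colours since $m>0$, and $(-1,-1)$ has negative indices, so the start node and king chambers are also distinct from all other nodes. Distinct index pairs give points at $\ell^\infty$-distance at least $k/2$, which exceeds $2r$ for large $k$.

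For (i), a white node on $R_n$ requires some $n'>0$ with $v_2(n)=n'-1$. For $n>0$ exactly one such $n'$ exists, namely $v_2(n)+1$; for $n=0$ there is none, and the start node is the unique node of that type. Similarly, a black node on $L_n$ with $n>0$ requires $v_2(n)=n'$, which determines $n'$ uniquely.

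For (ii), moving along $R_n$ toward larger $L$-index is moving Northeast. The unique white node on $R_n$ sits at $L$-index $v_2(n)+1\le n$, or at $L_0$ if $n=0$. Every black node on $R_n$ sits at $L$-index $m\ge 2^n>n$ (and $m\ge1$ when $n=0$), so it lies to the Northeast. Likewise, along $L_n$ increasing $R$-index moves Southeast. The black node on $L_n$ is at $R$-index $v_2(n)$. Every white node on $L_n$ has $R$-index $m\ge 2^{n-1}>v_2(n)$, which is checked directly for $n=1,2$ and holds since $v_2(n)\le\log_2 n<2^{n-1}$ for $n\ge 3$.

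For (iii), take a component at the intersection of $R_a$ and $L_b$ and a diagonal $R_n$ with $n\ne a$. Any point $(x,y)$ of the component satisfies $|x-y-kn|\ge k|a-n|-2r\ge k-2r$. Hence its $\ell^\infty$-distance to $R_n$ is at least $(k-2r)/2$, and the same holds for the $L$-diagonals. Choosing $k>2r+40$, say $k=50$ for the components used, makes this distance exceed $20$. So only components placed on a given diagonal come within $20$ squares of it. I expect no real obstacle here; the only care needed is the $2$-adic inequalities in (ii) for small $n$ and confirming that the components have bounded radius.
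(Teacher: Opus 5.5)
Your argument is correct and is exactly the routine verification the paper leaves implicit, since it records this as an observation with no written proof. Rewriting the placement rule as $v_2(m)=n$ for black nodes and $v_2(m)=n-1$ for white nodes gives (i), (ii) and the absence of collisions via the inequalities you write down, and the fixed radius of the components gives disjointness and (iii) for large $k$. Your remark that $k$ must be even for the intersections to be squares is a point the statement itself glosses over.

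The only slip is the aside ``say $k=50$''. Your own bound $k>2r+40$ would need every component to lie within distance $4$ of its anchor point, and that already fails for the black node, whose main line uses both \texttt{i4} and \texttt{f20}. Since the statement only asks for $k$ large enough, this does not affect the proof.
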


We now construct the king chamber and the various nodes needed to realize this structure on the chessboard. In all figures in this paper one may assume that the white pawns move upward (north) and the black pawns move downward (south). When analyzing the various components we must consider cases where one player passes locally by playing some move outside the figure. We tacitly assume that such moves do not end the game prematurely, lead to infinitely long forced sequences, or cause unexpected pieces to enter the relevant figure. These assumptions will be justified upon assembly of the final position.

\begin{figure}[!ht]
    \centering
    \begin{subfigure}{0.4\textwidth}
        \centering
        \includegraphics[scale=0.8]{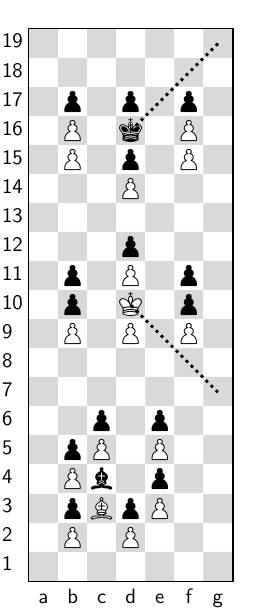}
        \caption{King chambers}
        \label{fig:King-Chambers}
    \end{subfigure}
    \begin{subfigure}{0.4\textwidth}
        \centering
        \includegraphics[scale=0.8]{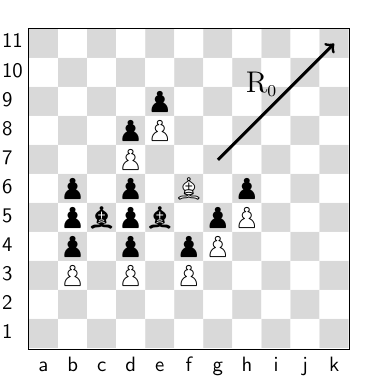}
        \caption[]{\tabular[t]{@{}l@{}}White root node \\ (White to move)\endtabular}
        \label{fig:Start-Node}
    \end{subfigure}
    \caption{Tree Root Components}
\end{figure}

First we introduce the king chambers. 

\begin{lemma} \label{lem:king-chamber-1}
    The king chamber chess component, shown in Figure~\ref{fig:King-Chambers}, obeys the following properties.
    \begin{enumerate}[(i)]
        \item Bishop checks from the dotted diagonals that cannot be captured or interfered with are checkmate.
        \label{lem:king-chamber-1i}
        \item Neither player can be stalemated provided no piece has entered the region depicted in Figure~\ref{fig:King-Chambers}.
        \label{lem:king-chamber-1ii}
    \end{enumerate}
\end{lemma}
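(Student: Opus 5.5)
This is a finite verification on the fixed diagram of Figure~\ref{fig:King-Chambers}, so I will argue by direct inspection, using the symmetry of the chambers: the black chamber is the colour-reversed mirror image of the white one. It therefore suffices to treat one king, say the white king, and transfer everything by symmetry.

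For part~\ref{lem:king-chamber-1i}, I will list the squares adjacent to the white king and show each is unavailable as a flight square. Each such square should be either occupied by a friendly piece, typically a pawn that is itself blocked by an enemy pawn so it cannot move out of the way, or attacked by an enemy piece inside the chamber. I expect the design to use a mutually blocked pawn wall. The one line not covered in this way is the dotted diagonal, which is exactly the line along which the checking bishop arrives.

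Next I must show that the check cannot be met in any other way. The king cannot capture the bishop, which sits far away on $R_{-1}$ or $L_{-1}$. The hypothesis ``cannot be captured or interfered with'' excludes capture by, or interposition of, other pieces. The remaining point is that the king's own stepping back along the checking diagonal stays in check, or that this square is blocked. Together these show that any such check is mate.

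For part~\ref{lem:king-chamber-1ii}, I will exhibit, for each player, a legal move that remains available as long as no foreign piece enters the region. The natural candidate is the king shuffling between two squares inside the chamber that are not attacked by any piece in the chamber. I then need to check that both shuffle squares stay unattacked after either side's shuffle moves. Since the only pieces in the region are those drawn, this is a finite check of attacked squares in each of the few reachable configurations, namely king on square $a$ or square $b$ for each colour.

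The main obstacle is reconciling the two parts. The king must have a free shuffle move to avoid stalemate, yet a diagonal check must leave it no escape. So I must verify carefully that each shuffle square is either on the checking diagonal itself or attacked by the same bishop, since the king stays on the check line after stepping along it. I also need that no pawn move or other chamber piece can block that line. I would handle this by writing out, for each of the two king placements, the full list of neighbouring squares together with the reason each one is unavailable under a dotted-diagonal check.
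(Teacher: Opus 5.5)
Your plan for (i) matches the paper. Checking that every square adjacent to a king is unavailable is exactly what is needed. In Figure~\ref{fig:King-Chambers} neither king has any legal move at all, so a dotted-diagonal check that can be neither captured nor blocked is immediately mate.

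The gap is in (ii). You propose to avoid stalemate by letting each king shuttle between two unattacked squares inside its chamber. The chamber contains no such squares: the kings are completely frozen, and that is exactly what makes (i) immediate. Carried out on the actual diagram, your inspection would find no shuffle squares, and (ii) would be left unproved.

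The missing idea is that the source of tempo moves is separated from the kings. Each side has a trapped ``stalemate prevention'' bishop (on \texttt{c3} and \texttt{c4}) that can shuttle back and forth inside its cage indefinitely. As long as no piece enters the region, these cages stay intact, so both players always have a legal move. These bishops are also the source of the ``waiting moves'' used later in the analysis of the start node and the black node.

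Consequently, the ``main obstacle'' you identify, reconciling a mobile king with an inescapable check, never arises. Because mobility comes from pieces other than the kings, you do not need shuffle squares on the checking diagonal or covered by the checking bishop. A king-shuffle chamber might be designable in principle. It is not the component the lemma describes, however, and it would bring its own burden: showing that the king's changing square never affects which dotted-diagonal checks land.
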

\begin{proof}
    Property~\ref{lem:king-chamber-1i} holds as the only way to escape such a check is to move out of it, but neither king currently has any moves. Property~\ref{lem:king-chamber-1ii} holds as both players will have an infinite source of moves with the trapped stalemate prevention bishops on {\tt c3} and {\tt c4} in this case.
\end{proof} 

At the beginning of the game with White to move, the only pieces besides the stalemate prevention bishops that have available legal moves will be found in the \emph{root node}, which is shown in Figure~\ref{fig:Start-Node}. The root node will lie somewhere to the right of the king chambers as we saw in Figure~\ref{fig:bishop-tree-structure}, where it is the leftmost white node marked with S.

\begin{lemma} \label{lem:start-node}
    Suppose the bishops depicted in Figures~\ref{fig:King-Chambers} and~\ref{fig:Start-Node} are the only pieces with moves available on the entire chessboard, Black guards no squares along the diagonal ray indicated by the solid arrow, and White is to move. 
    \begin{enumerate}[(i)]
        \item If White does not move their bishop in the start node (Figure~\ref{fig:Start-Node}) to a protected square in the direction indicated by the solid arrow, Black can ensure that a black bishop is able to leave Figure~\ref{fig:Start-Node} and that White's free pieces consist of at most a pawn in the start node. \label{lem:start-node-no-white-deviations}
        \item If White moves their bishop in the direction indicated by the solid arrow to a square where it threatens to mate Black next turn, Black must immediately capture the bishop with his bishop on {\tt e5} lest he be checkmated in finitely many moves. The only pieces remaining in the start node after such a capture are pawns which cannot move. \label{lem:start-node-no-black-deviations}
    \end{enumerate}
   
\end{lemma}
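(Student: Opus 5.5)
This is a finite case analysis of the start-node figure. The hypotheses say that only the bishops in Figures~\ref{fig:King-Chambers} and~\ref{fig:Start-Node} can move, so White's legal moves are few: moves of the trapped stalemate-prevention bishop in the king chamber, and moves of the free white bishop in the start node. Moves of that bishop split into rays other than the solid one, and moves along the solid ray, the latter to protected or unprotected squares. I would enumerate these and check each against Figure~\ref{fig:Start-Node}, using Observation~\ref{obs:global-properties}(\ref{obs:global-properties-wiggle-room}) so that nothing outside the figure interferes along the relevant diagonals.

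For (\ref{lem:start-node-no-white-deviations}), the key point is that the white bishop, by sitting in the start node, blocks the black bishop on {\tt e5}, or the pawn structure that confines it. I would go through White's alternatives.

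\emph{White passes}, for instance with the king-chamber bishop, or makes a local bishop move off the solid ray. Then Black can open a line with a pawn capture or a pawn push that the white bishop was previously preventing, or simply move the black bishop out of the figure along a now-clear diagonal. Black's remaining task is to make sure White does not gain a second free piece. Any white bishop still inside the figure will either be captured in this exchange or remain boxed in by pawns, so White is left with at most one freed pawn.

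\emph{White moves along the solid ray to an unprotected square.} Black captures the bishop with the {\tt e5} bishop. This capture is itself the black bishop leaving the figure, and White's start-node pieces are then reduced to pawns, at most one of which became mobile.

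In each case I would verify explicitly from the diagram which white pawn, if any, gains a move.

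For (\ref{lem:start-node-no-black-deviations}), suppose the white bishop stands on a protected square of the solid ray and threatens mate along a checkmate diagonal of the black king chamber. By Lemma~\ref{lem:king-chamber-1}(\ref{lem:king-chamber-1i}), an unblockable bishop check there is mate, so Black must address the threat immediately. Black's only pieces with moves are his king-chamber bishop, which is trapped and cannot reach the line, and the {\tt e5} bishop. Interposing on the long diagonal fails, because the hypothesis that Black guards no square of the ray, together with the geometry, leaves him no blocking square. Hence the capture by the {\tt e5} bishop is forced.

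After the capture, White recaptures with the protecting piece, or the protection simply becomes irrelevant. I would then check the figure to confirm that all remaining start-node pieces are pawns blocked head-on or wedged, so they have no moves. The phrase ``in finitely many moves'' covers the case where the mate threat is delivered after one forced intermediate move rather than at once.

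The main obstacle is the exhaustive check in (\ref{lem:start-node-no-white-deviations}): every white deviation, including moves to protected squares not on the ray and bishop retreats, must leave Black a way to free his bishop without White gaining more than one pawn. I would tackle this by reading off the explicit squares in Figure~\ref{fig:Start-Node} and tabulating each white bishop destination against Black's reply.
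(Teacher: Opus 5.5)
\textbf{The gap is in (ii).} You rule out interposition by citing the hypothesis that Black guards no square of the solid-arrow ray. That hypothesis is about the ray the white bishop has just travelled along ($R_0$). It says nothing about the line along which White's bishop will go on to give mate, or about the checkmate diagonal of the king chamber. So it cannot show that Black's freed bishop has no way to step onto those lines.

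The paper's proof treats this case explicitly. If Black fails to capture, White usually mates at once. If instead Black's bishop threatens to interfere with the check, that bishop is necessarily undefended. White then captures it, returns to the square from which the threat was made, and mates on the following turn. That is what ``checkmated in finitely many moves'' refers to, not one forced intermediate move as you suggest. Without this capture-the-blocker argument, or an explicit check from the figure that no blocking or guarding square is reachable, you have not shown that Black must capture with the {\tt e5} bishop. You also list Black's mobile pieces as only the king-chamber bishop and {\tt e5}. The start node also contains a black bishop on {\tt c5}, so its non-capturing replies need to be covered too.

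\textbf{A secondary problem is in (i).} You misplace the mechanism. The relevant feature is not that the white bishop blocks Black's pieces. It is that the white bishop stands attacked by Black's bishop in the initial position. Consequently every White move other than a flight along the arrow to a protected square lets Black capture it immediately. This includes a waiting move with the stalemate-prevention bishop. The capture frees one of the black bishops on {\tt c5} or {\tt e5}, while White's recapture frees at most one pawn.

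When White simply passes, the white bishop has not moved, so nothing it was ``previously preventing'' becomes available. Your proposed reply of opening a line therefore does not apply. Black's reply is just the capture. With that observation, the case check you propose for (i) reduces to confirming in the figure that each such capture frees a black bishop and at most one white pawn.
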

\begin{proof}
    For~\ref{lem:start-node-no-white-deviations}, observe that any of the five moves (including a move by a stalemate prevention bishop) not in the direction of the arrow allow for the immediate capture of White's bishop in a way which frees one of Black's bishops on {\tt c5} and {\tt e5} in the start node, while White frees at most a single pawn. Similarly, a move to an unprotected square in the direction of the arrow allows Black the safe capture of White's bishop in a manner which leaves Black with a free bishop.
    
    For~\ref{lem:start-node-no-black-deviations}, observe that if Black fails to capture then White can usually give mate immediately. In the case where Black's bishop threatens to interfere with the check, White may capture Black's necessarily undefended bishop, return to the square where the threat was originally made, and checkmate on the turn after that.
\end{proof}

\begin{figure}[!ht]
    \centering
    \begin{subfigure}{0.49\textwidth}
        \centering
        \includegraphics[width=1.13\linewidth]{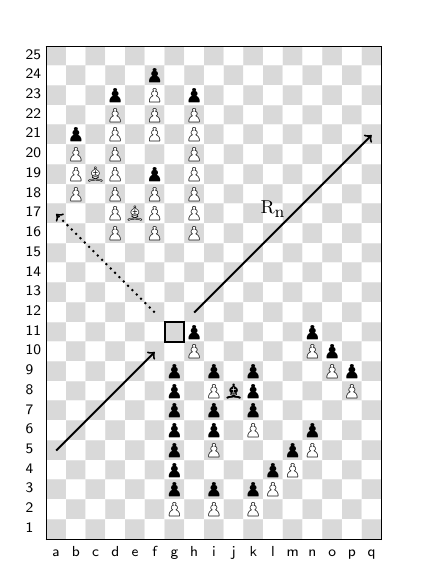}
        \caption{Pristine black node}
        \label{fig:Black-Node-pristine}
    \end{subfigure}
    \begin{subfigure}{0.49\textwidth}
        \centering
        \includegraphics[width=1.13\linewidth]{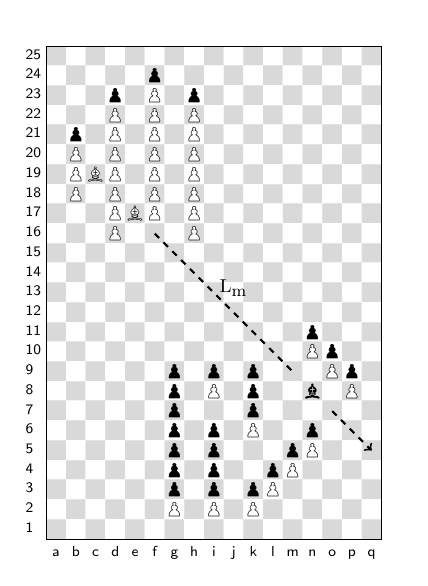}
        \caption{Used black node (Black to move)}
        \label{fig:Black-Node-used}
    \end{subfigure}
    \caption{Black Node. The sequence {\tt 1.\ Bg11 Bxg11 2.\ hxg11 h10 3.\ g12 h9 4.\ g13 h8 5.\ g14 h7 6.\ g15 h6 7.\ g16 hxi5 8.\ g17 i4 9.\ g18 i5 10.\ gxf19 i6 11.\ f20 Bi7 12.\ f19 Bk5 13.\ f18 Bn8 14.\ f17} in (a) leads to the position in (b).}
    \label{fig:Black-Node}
\end{figure}

The only protected squares along the diagonal that White can move to are contained in so-called black nodes, indicated by black circles in Figure~\ref{fig:bishop-tree-structure}. An example of a black node is shown in Figure~\ref{fig:Black-Node-pristine}. Lemma~\ref{lem:start-node} implies the white bishop exiting the start node will be forced to move to the square labeled {\tt g11} in Figure~\ref{fig:Black-Node-pristine} in order to prevent capture without compensation. As the solid diagonal is unobstructed, White may move to any such node along the diagonal, corresponding to White's choice of first move in the corresponding Gale-Stewart game. Crucially, after the move to {\tt g11} it is White who threatens to win! Due to the global arrangement of the nodes (as depicted in Figure~\ref{fig:bishop-tree-structure}), there is nothing blocking the dotted diagonal to the top left of any black node. As the king chamber is somewhere on the left of the black node, if given a free move White can give mate immediately by moving his bishop along the dotted diagonal.

Thus, in order to avoid being immediately mated, Black is forced to recapture on {\tt g11} with his single free bishop. 

\begin{lemma}
    Suppose White is to move and the only free piece on the board is a Black bishop on the highlighted square {\tt g11} in Figure~\ref{fig:Black-Node-pristine}.

    \begin{enumerate}[(i)]
        \item White can play to ensure that either the position in Figure~\ref{fig:Black-Node-used} occurs, or a white bishop is able to escape the bounds of the figure along the dotted ray before any black bishop can escape the bounds of the figure or guard any square on the dotted ray. 
        \item Black can play to ensure that either the position in Figure~\ref{fig:Black-Node-used} occurs, or a black bishop escapes the figure along the ray $L_m$ two moves before any white bishop can escape the bounds of the figure. 
    \end{enumerate}
    
\end{lemma}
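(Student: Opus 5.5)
The proof will be a local case analysis of the black node in Figure~\ref{fig:Black-Node-pristine}, organised around the forcing line in the caption of Figure~\ref{fig:Black-Node}. The idea is that this line is the unique ``main line'': at every ply, a deviation by either player hands the opponent a decisive tempo in the race to get a bishop out of the figure. As in the earlier component lemmas, we assume moves outside the figure are harmless passes. We also assume that the only relevant long-range threats are bishop escapes along the dotted ray, which gives mate via Lemma~\ref{lem:king-chamber-1}, and along $L_m$. Part~(i) is a strategy for White and part~(ii) is a strategy for Black, so each is proved by fixing that player's moves to the main line and checking every deviation of the opponent.

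For part~(i), White plays {\tt hxg11}, then the pawn march {\tt g12}, \dots, {\tt g18}, {\tt gxf19}, and so on. At each move number $j$ we list Black's legal alternatives to the caption move. These are a pawn move other than the indicated one, a premature bishop move after {\tt hxi5} frees the bishop, or a pass. For each alternative we show that White gains a tempo: the advancing white pawn can then capture or unblock so that the white bishop behind the pawn chain gets open access to the dotted diagonal. The bishop then leaves the figure along the dotted ray while the black bishop is still at least one move from either escaping or guarding a square on that ray. The key counting fact is that Black's bishop is freed only after the pawn sequence {\tt h10}--{\tt hxi5}--{\tt i4}--{\tt i5}--{\tt i6}, so every wasted black tempo delays Black by exactly one move relative to White's fixed march.

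For part~(ii), Black plays the caption moves himself and must handle White's deviations: not recapturing with {\tt hxg11}, stopping the {\tt g}-pawn march, or passing. In each case Black's pawn chain on the {\tt h}/{\tt i} files advances one step ahead of schedule, or a white defender is missing. Black's bishop then reaches {\tt i7}/{\tt k5}/{\tt n8} early and exits along $L_m$. We must verify that it exits with a two-move margin over any white bishop. This comes from White's bishop being locked behind the pawns on {\tt f}/{\tt g}, which need at least two further moves to open a line. Observation~\ref{obs:global-properties}\ref{obs:global-properties-wiggle-room} guarantees that nothing outside the figure interferes near these diagonals.

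The main obstacle is the bookkeeping of tempo in the middle of the line, around moves 7--11 ({\tt hxi5} through {\tt Bi7}). Here both sides' pawns interact via captures, and a deviation may open lines for both bishops at once, so one must check carefully that the deviating side is always strictly behind. I would handle this with a small table: for each ply, record the minimum number of moves each side needs to free a bishop onto its escape ray, and show that any deviation moves that number in the opponent's favour. The line is essentially forced, so the table is finite and checkable, and I expect this to be the tedious part rather than a conceptual one.
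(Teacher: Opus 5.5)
Your plan matches the paper's proof: both take the caption sequence as the main line and check each ply's alternatives exhaustively, showing that any deviation loses the bishop race by the required tempo margin. In the paper the deviations fall into a few types:
\begin{itemize}
\item waiting moves;
\item self-blocking moves ({\tt 4...hxi8}, {\tt 7...h5}, {\tt 10.~g19}) that permanently destroy one's own bishop release;
\item tempo-losing alternatives ({\tt 4.~ixh9}, {\tt 7.~ixh6}, {\tt 13.~Bf18}, {\tt 13.~Bg19});
\item the capture {\tt 9...fxg18}, which White answers by leaving the line with {\tt 10.~fxg18} and {\tt 11.~f17}.
\end{itemize}
So your table should allow an ``infinite'' entry and adaptive replies, not a fixed march.
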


\begin{proof}
Consider the sequence of moves {\tt 2.\ hxg11 h10 3.\ g12 h9 4.\ g13 h8 5.\ g14 h7 6.\ g15 h6 7.\ g16 hxi5 8.\ g17 i4 9.\ g18 i5 10.\ gxf19 i6 11.\ f20 Bi7 12.\ f19 Bk5 13.\ f18 Bn8 14.\ f17}, the result of which is shown in Figure~\ref{fig:Black-Node-used}. We argue that neither side can deviate from this sequence without allowing their opponent's bishop to escape in the needed way. Most of the moves in the sequence are the only legal move besides waiting moves with the stalemate prevention bishop. The moves {\tt 4...hxi8, 7...h5} and {\tt 10. g19} are all unthinkable, for they each permanently destroy the possibility of freeing one's own bishop while still permitting the opponent to free theirs. The fact that {\tt 7...h5} may allow some black pawns to exit the figure is not a concern given that there are no pieces immediately south of the node by Observation~\ref{obs:global-properties}\ref{obs:global-properties-wiggle-room}.

If White plays a waiting move with the stalemate prevention bishop during that sequence, then the dashed diagonal in Figure~\ref{fig:Black-Node-used} will be unguarded on move 14, allowing Black to escape with the bishop along this diagonal before White can play {\tt f17}. Similarly, if Black plays a waiting move during this sequence, White will play {\tt f17} before Black can move his bishop onto $L_m$, letting White maneuver his bishop to {\tt g11} without worry. The remaining deviations are all similar to playing waiting moves. If White deviates with one of {\tt 4.\ ixh9, 7.\ ixh6, 13.\ Bf18,} or {\tt 13.\ Bg19}, he will lose the race by at least two tempi, or as if he played two waiting moves. If Black deviates with {\tt 9...fxg18}, White can play {\tt 10.\ fxg18} and {\tt 11.\ f17} by force, winning the race handily. As this is an exhaustive list of all deviations by either player, the claimed sequence is indeed forced.
\end{proof}

The final position in that node after the moves above is now shown in Figure~\ref{fig:Black-Node-used}. The black bishop is now standing unguarded on {\tt n8} on the release diagonal, which is marked with a dashed line. Meanwhile, the white bishop on {\tt e17} threatens its capture along this diagonal. All other pieces in the node are completely deadlocked and neither player can make a move in this component except with those newly liberated bishops. Black to move should not capture the white bishop with {\tt ...Bxe17}, since he loses his only free piece and the response of {\tt dxe17} will lead to the subsequent release of the white bishop on {\tt c19} and checkmate soon after. Black should also not leave the dashed release diagonal, lest he wishes to allow the newly liberated white bishop to checkmate his king as if he had played a waiting move. Thus, Black's only option is to move along the dashed diagonal toward the Southeast to some guarded square. Crucially, Black may choose among the potentially infinitely many such squares. This freedom by Black to choose a destination square with his bishop starting from this node motivates the name `black node'.

\begin{figure}[!ht]
    \centering
    \begin{subfigure}{0.49\textwidth}
        \centering
        \includegraphics[width=1.13\linewidth]{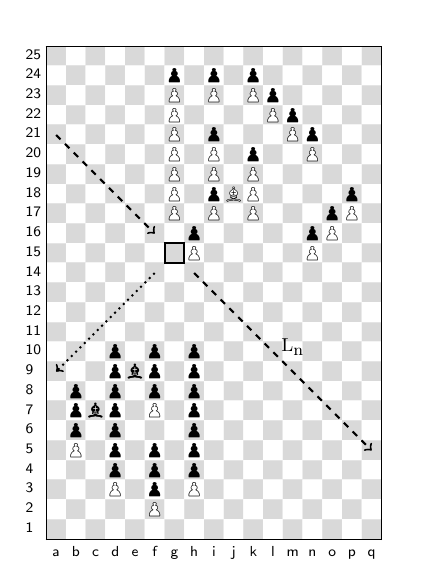}
        \caption{Pristine white node}
        \label{fig:White-Node-pristine}
    \end{subfigure}
    \begin{subfigure}{0.49\textwidth}
        \centering
        \includegraphics[width=1.13\linewidth]{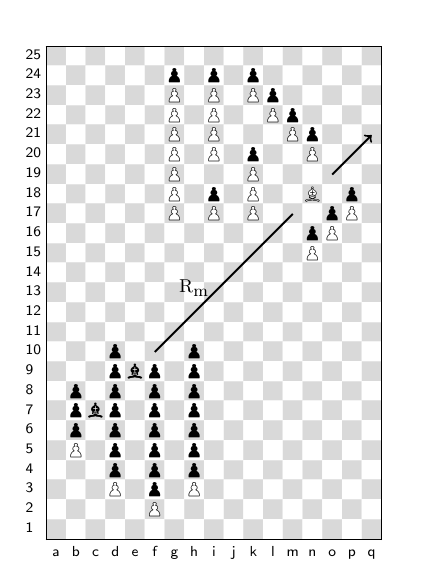}
        \caption{Used white node (White to move)}
        \label{fig:White-Node-used}
    \end{subfigure}
    \caption{White Node. The sequence {\tt 1...Bg15 2.\ Bxg15 hxg15 3.\ h16 g14 4.\ h17 g13 5.\ h18 g12 6.\ h19 g11 7.\ h20 g10 8.\ hxi21 g9 9.\ i22 g8 10.\ i21 gxf7 11.\ i20 f6 12.\ Bi19 f7 13.\ Bk21 f8 14.\ Bn18 f9} in (a) leads to the position in (b).}
    \label{fig:White-Node}
\end{figure}

\begin{figure}[!ht]
    \centering
    \includegraphics[width=\linewidth]{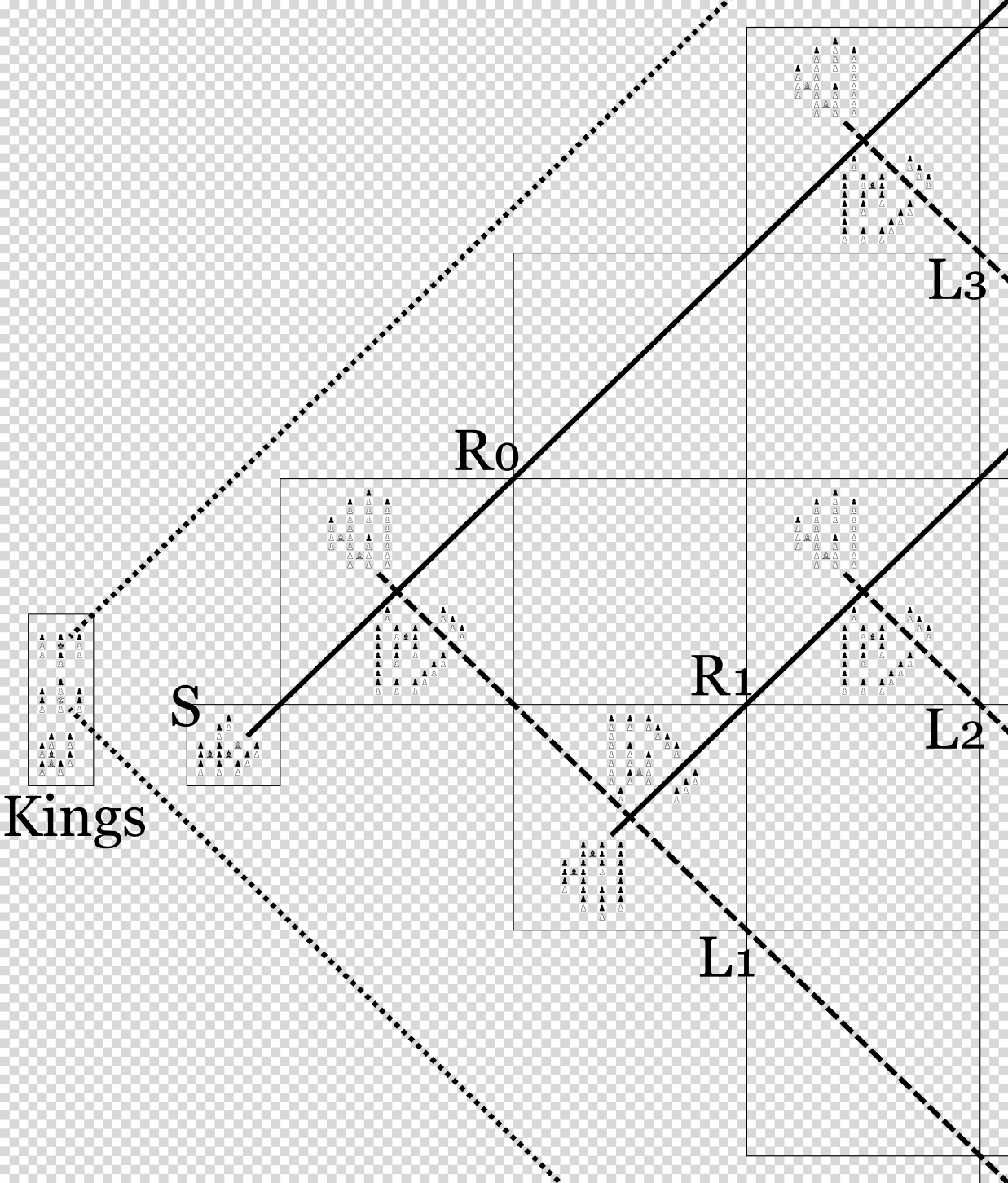}
    \caption{An open Gale-Stewart game with draws embedded onto the infinite chessboard. The spacing between adjacent parallel diagonals is $k=50$ here. The king chamber and root node were moved slightly to fit on the page, but this does not materially affect the analysis.}
    \label{fig:infinite-game-tree}
\end{figure}

All the guarded squares along the dashed diagonal are contained in so-called white nodes as shown in Figure~\ref{fig:White-Node-pristine}. Such a node is simply an upside-down and color-inverted version of a black node. Its function is the same but with the colors reversed. Black now threatens mate-in-one with his bishop on {\tt g15} by moving along the dotted diagonal to the bottom left and, thus, White is forced to capture it with his newly liberated bishop. This will once again kick off a series of forced moves, exactly mirroring the main line of the black node. The position after these moves is depicted in Figure~\ref{fig:White-Node-used}, with the white bishop standing on {\tt n18} and being threatened by the newly liberated black bishop on {\tt e9}. This forces the white bishop to move to the Northeast along the diagonal marked with the solid line and select a guarded square on it at some black node. Similarly to before, this freedom by White to choose a destination square with his bishop motivates the name `white node'.

These are all the necessary building blocks to embed open Gale-Stewart games with draws into the infinite chessboard as shown in Figure~\ref{fig:infinite-game-tree}. The diagonals going from the bottom left to the top right marked with a solid line are the ones where a white bishop flees from a black bishop onto a black node. Similarly, black choice diagonals are marked with dashed lines and go from the top left to the bottom right. The king chambers and root node are on the left of the position while the tree is on the right, with the nodes placed with $k=50$ as shown in Figure~\ref{fig:bishop-tree-structure} and the surrounding discussion.

We have thus shown how to construct a full bi-colored tree $\mathbb N^{<\omega}$ with alternately colored nodes on the chessboard. Any arbitrary desired game $G_{S_1,S_2}$ is obtained by only building a subset of the nodes of $\mathbb N^{<\omega}$ corresponding exactly to the reduced game tree $T$ of that game. A game in this position consists of White and Black alternately choosing a node to move to on their diagonals $R_n$ and $L_n$ respectively, while venturing deeper into the tree, until a player reaches a terminal node. A terminal node in this chess tree simply corresponds to the fleeing bishop having no safe square along his escape diagonal, resulting in that player no longer being able to protect his bishop from immediate capture while also preventing checkmate. Thus, a white terminal node hands Black the victory and vice versa. By construction, white terminal nodes in $T$ are losing nodes for player one in $G_{S_1,S_2}$, while black terminal nodes in $T$ are losing nodes for player two.

All other moves are of a local nature and completely forced. As already shown, any deviation from the main line in a node will result in the opponent winning in some small and constant number of moves. Thus, the chess position corresponds exactly to the open Gale-Stewart game with draws $G_{S_1,S_2}$ and the winner of the chess game corresponds to the winner of that Gale-Stewart game. Note that in the main line, both pristine and used nodes admit no local moves when not in use, so our earlier analysis of the deviations was indeed exhaustive.

This construction already takes the fifty-move rule into account since the main line involves very frequent pawn moves and piece captures. Thus, no further modifications are needed to accommodate the rule.

We are now ready to prove Theorem~\ref{thm:main}:

\begin{proof}[Proof of Theorem~\ref{thm:main}]
    First, we prove statement (i). Given the open Gale-Stewart game with draws $G_{S_1,S_2}$, let $T$ be its reduced game tree --- see Definition~\ref{def:reduced-game-tree}. Let $p$ be the infinite chess position corresponding to $T$ constructed by the procedure described in this section, namely by appropriately placing white and black nodes onto the intersections of diagonals if and only if they correspond to a node contained in $T$. Since $T$ is in $\Delta_1^{0,(S_1, S_2)}$ by Observation~\ref{obs:reduced-game-tree-prop}, the same holds for $p$ by the computability of the chess construction.

    Next, we prove statement (iii). Let $\sigma,\tau: \mathbb N^{<\omega} \to \mathbb N$ be strategies for $G_{S_1,S_2}$ and suppose player one plays according to $\sigma$ and player two according to $\tau$. Recall that by Observation~\ref{obs:reduced-game-tree-prop} and the fact the game is open, player one (resp.\ two) wins if and only if the resulting game reaches a player two (resp.\ one) losing node in finite time. Thus, the result of playing $\sigma$ against $\tau$ only depends on $\sigma|_T$ against $\tau|_T$ where the strategies are restricted to only those nodes contained in the reduced game tree.

    In the chess position $p$, we identify White as player one and Black as player two. The players are forced to climb through the reduced game tree with their bishops and as already shown in the chess analysis of this section, any deviation by either player from the main line in a node allows the other player to win in a uniformly bounded number of moves by following a computable strategy. We identify moves in $G_{S_1,S_2}$ restricted to sequences in $T$ with the choice of long-range bishop moves in the play stemming from $p$. The game is decided upon reaching a terminal node of $T$, keeping in mind that white terminal nodes in $p$ correspond to losing nodes for player one in $G_{S_1,S_2}$, while black terminal nodes in $p$ correspond to losing nodes for player two. Thus, White can force a win (resp.\ draw) in the chess game if and only if $\sigma|_T$ is a winning (resp.\ drawing) strategy, and vice versa, Black can force a win (resp.\ draw or win) in the chess game if and only if $\tau|_T$ is a winning (resp.\ drawing) strategy. In particular, White (resp.\ Black) can do so computably given $\sigma$ (resp.\ $\tau$) because of the computable nature of all required chess moves beyond the long-range bishop moves.
    
    Conversely, suppose a player has a winning (resp.\ drawing) strategy $\sigma$ in the chess position $p$. By the chess analysis of this section, this strategy prescribes that player's long-range bishop moves between nodes on the chess board when the opponent follows the main line. By the computability of the chess position in statement (i), this immediately yields a $\Delta_1^{0,\sigma}$ winning (resp.\ drawing) strategy $\hat\sigma$ for the corresponding player in the Gale-Stewart game restricted to its reduced game tree $T$. By the observation above that a strategy $\sigma$ is winning (resp.\ drawing) for some player if and only if that strategy restricted to $T$ is winning (resp.\ drawing), $\hat\sigma$ can be arbitrarily extended to a winning (resp.\ drawing) strategy for $G_{S_1,S_2}$.
    
    Finally, we prove statement (ii). We have already shown the correspondence of the forced outcome of the game for White (resp.\ Black) and player one (resp.\ two) in the proof of statement (iii). The game value of $G_{S_1,S_2}$ for some player (if it exists) cannot be larger than the game value of $p$ because while the theoretical outcomes are the same, a single turn in $G_{S_1,S_2}$ corresponds to multiple moves in the chess game: a long-range bishop move as well as some moves within a node.
\end{proof}

This theorem can be viewed as a strong universality statement about infinite chess, namely that it is at least as powerful as any other open game allowing draws. Notably, the explicitness of the method of the embedding immediately implies that any computable tree can be embedded as a computable chess position --- the embedding has exactly the same complexity as the reduced game tree $T$. Further, the complexities of the winning and drawing strategies of the chess game mirror exactly those of the strategies in the corresponding games being simulated. Thus, many results on the complexities of winning strategies in open games are applicable to infinite chess in their strongest forms.

\begin{figure}[!ht]
    \centering
    \includegraphics[scale=0.7]{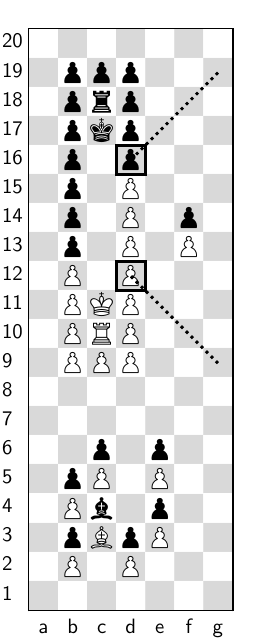}
    \caption{Selfmate King Chambers}
    \label{fig:King-Selfmate-Chambers}
\end{figure}

We now briefly discuss a modification of this construction for another style of chess puzzle. 

\begin{remark}The usual goal of a player in a chess puzzle is to checkmate the opposing king. However, a popular category of chess puzzles is \emph{selfmate} puzzles, where each player has the goal of causing the checkmate of their own king instead. Selfmate chess is also known as misère chess. Our construction can be modified to work in this setting. In particular, Theorem~\ref{thm:main} and its corollaries in Section~\ref{sec:intro} also hold for misère infinite chess.
\end{remark}
\begin{proof}
    Simply replace the king chambers of Figure~\ref{fig:King-Chambers} with those shown in Figure~\ref{fig:King-Selfmate-Chambers}. 
If White is successful in escaping the tree first via a dotted diagonal in a black node and moves his bishop onto the top right diagonal targeting the black {\tt d16} pawn in the king selfmate chambers, then Black cannot prevent selfmate via the forcing sacrifice {\tt Bxd16+ Kxd16\#} anymore. The same holds for the case where Black can escape via a dotted diagonal in a white node first and moves his bishop onto the bottom right diagonal targeting the {\tt d11} pawn. The blockading pawns on {\tt f13} and {\tt f14} guarantee that the vulnerable pawns cannot be guarded in time from any direction other than the indicated selfmate diagonals. Otherwise, the position plays out in exactly the same manner as in the construction for normal infinite chess, with both players racing each other down the tree with the goal of having their bishop escape or forcing their opponent to a terminal node.
\end{proof}

\section{Arbitrary Game Values Using Only Kings and Pawns}
\label{sec:pawn-tree}
The previous construction relied heavily on a single bishop's ability to select between infinitely many squares. One may then ask about the situation in versions of infinite chess where all pieces have bounded movement. Our main concern will be the following rule set: 
\begin{definition}
    \emph{Short-range infinite chess} is a version of infinite chess where all pieces are restricted to only be able to move a maximum distance of seven squares per turn along any direction. All other rules of infinite chess stay the same.
\end{definition}

This rule set was the one considered by Kőnig in his 1927 paper \cite{konig}, making it the first version of infinite chess in the literature that we are aware of.

Our main construction in this section will use only kings and pawns to realize all countable game values, via the embedding of a certain game on trees we call \emph{choosing-nodes-from-$Z$}.

\subsection{Zugzwang and Choosing-Nodes-From-Z}
\label{sec:zugzwang-and-choosing-nodes-game}

The phenomenon of transfinite game values does not arise when playing short-range infinite chess with only finitely many pieces, as the game tree is only finitely branching. However, the situation with infinitely many pieces is different. While no given piece has an infinite movement option, the players may still have an infinite choice, namely the choice of what piece to move. Thus there is the possibility of constructing positions with transfinite game values.

\begin{figure}[!ht]
    \centering
    \includegraphics[width=\linewidth]{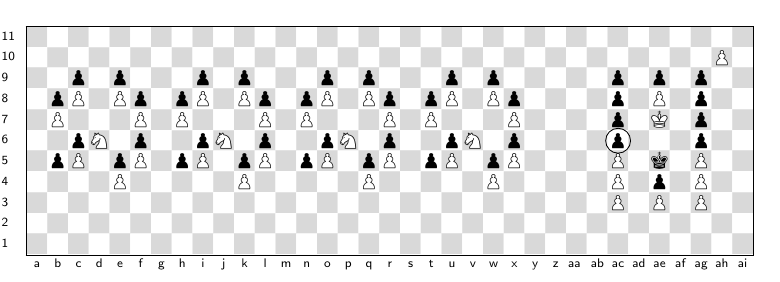}
    \caption{Short-range chess position with game value $\omega$ (Black to move). The pictured pattern continues infinitely to the left.}
    \label{fig:short-range-mate-in-omega}
\end{figure}

\begin{example} \label{ex:short_range_omega}
The position pictured in Figure~\ref{fig:short-range-mate-in-omega} consists only of pieces of finite range and has game value $\omega$ with Black to move.

\end{example}
\begin{proof}
 The kings of the two players are permanently deadlocked in their chamber on the right. White's only hope to checkmate Black is to reach the king chamber with one of his infinitely many knights, which are all currently imprisoned and cannot move more than twice unless Black allows it. However, Black is to move and is in zugzwang i.e., he would love to pass the turn and maintain the knight prisons, but the compulsion to move is his undoing. The only mobile black pieces in the position are the black pawns in the knight traps, each of which would free a white knight if moved. For example, if Black plays {\tt b4}, White can respond with {\tt Nc4} and the newly liberated knight will slowly make its way toward the circled pawn on square {\tt ac6}, checkmating the black king. Black cannot avoid freeing a knight in this manner, nor can he hope for stalemate, but he can at least freely choose which white knight to liberate and what its initial distance to the king chamber will be. Thus, the overall position has game value $\omega$.
\end{proof}

In Example~\ref{ex:short_range_omega}, the fact Black was in zugzwang to start played a pivotal role. In fact, one can prove zugzwang themes are necessary to any position with transfinite game value, by studying a further slight variation of the rules: 

\begin{definition}
    \emph{Short-range infinite chess with passing for Black} is a rule variant of short-range infinite chess, where Black may choose to pass his turn instead of moving a piece. Passing does not reset the fifty-move rule counter, and if Black's only legal move is to pass, then the position is still considered a draw by stalemate. All other rules are as before.
\end{definition}

Our interest in this variant is due to the following theorem:

\begin{theorem}
\label{thm:short-range-zugzwang}
    Only finite game values arise for White in short-range infinite chess with passing for Black.
\end{theorem}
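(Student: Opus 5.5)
The plan is to use a compactness (König's lemma) argument exploiting the fact that in short-range chess every single move changes the position only within a bounded window. Suppose White wins from a position $p$ with some game value, so White has a winning strategy. We want to show the game value is some finite $n$. It suffices to show there is a finite bound $N$ such that White can force mate within $N$ moves; equivalently, that the tree of plays consistent with a fixed winning strategy for White is finite.

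The first step is to fix the key observation: because Black may pass, White's winning strategy must in particular defeat the all-pass line. Against Black passing forever, White plays a single sequence of moves and must mate after some finite number $n_0$ of moves. Each White move has range at most seven squares, and mate is a local event near the black king, so the whole mating line against passing only involves pieces within some finite radius $r$ of the kings and of the moved pieces. The idea is that White should essentially be able to repeat this same local mating plan whatever Black does. Black's non-passing moves are of only finitely many kinds that can matter, namely those moves occurring within distance about $7\cdot(\text{remaining moves})$ of the relevant region. Moves far away are "effectively passes" for the next few turns, since information propagates at most seven squares per move.

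The second step turns this into a finite-branching argument. Define, for a position and a bound $m$, the relevant region as the ball of radius $7m$ plus a margin around the black king. Any Black move outside that region cannot affect whether White can mate within $m$ moves; White can treat it as a pass. Hence the game "can White mate within $m$ moves" depends only on finitely many Black options at each step, namely the moves inside the region plus one representative pass. This is a finitely branching tree, so by König's lemma, if White wins in every branch of this restricted tree there is a uniform finite bound. Now I would argue by induction on the ordinal game value $\alpha$ of $p$ that $\alpha$ is finite. If $\alpha\ge\omega$ were minimal infinite at some Black-to-move node, Black's options would have values unbounded below $\alpha$. But options far from the action are equivalent to passing and so have value at most the pass-value plus a constant. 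The finitely many local options each have finite value by minimality. So the supremum is finite, a contradiction. At White-to-move nodes the value is a successor of a minimum, which preserves finiteness.

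The main obstacle is making precise "a far-away Black move is equivalent to a pass." A distant move can matter later if it starts a chain of pieces that eventually reaches the kings, and the game value already accounts for the full future. I would handle this with a bounded-horizon argument: compare White's strategy after a distant move $x$ with the strategy after a pass. White copies the pass-strategy, which mates in $\beta<\omega$ moves (finite by induction), and the copied moves remain legal and mating because in $\beta$ moves the disturbance from $x$ travels at most $7\beta$ squares. Here "distant" must be chosen depending on $\beta$, which is fine because $\beta$ is already determined by the pass option. Details to check include checks or discovered attacks from long-range pieces. These are excluded since all pieces have range at most seven, so influence is truly local. The fifty-move and repetition rules do not hurt White within a bounded number of moves, except that passes do not reset the counter, which should only require care in the bookkeeping.
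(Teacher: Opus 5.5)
Your skeleton is the same as the paper's proof of this theorem: pass to a Black-to-move position of value $\omega$, note that the pass option has finite value $\beta$, argue that ``distant'' Black moves are no better for Black than a pass, and conclude that only finitely many Black moves remain, so the supremum is finite. The genuine gap is the step you flagged yourself, and the justification you give for it does not work. There are two problems.

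\emph{The region cannot be a ball around the king.} A ball of radius about $7m$ around the black king does not suffice, because several rules are not governed by the seven-square light cone.
\begin{enumerate}[(i)]
\item White may not pass, so a mating line can require a tempo move that exists only very far away.
\item The fifty-move counter, which Black's passes do not reset, can force White to find a pawn move or capture that may only be available far away.
\item Stalemate depends on Black having some legal move anywhere on the board.
\end{enumerate}
A single distant Black move that blocks White's only far-away tempo pawn or counter-resetting pawn does change whether White mates within $m$ moves. So your claim that moves outside the ball ``cannot affect'' this is false.

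\emph{The copying argument assumes what needs proof.} It presupposes that White's pass-strategy never acts near the disturbance created by $x$. But a strategy is an infinitely branching tree. Black can play right next to the disturbed region and steer White's replies there. Nothing in your argument bounds the set of squares White's strategy touches over all Black replies. Establishing such a bound is exactly the nontrivial content, and it needs its own induction through the infinitely branching Black nodes. The fact that the disturbance spreads at most $7\beta$ squares does not give it. Your K\"onig's lemma remark is also circular, since the region is defined in terms of the bound $m$ you are trying to produce, and it is not used later.

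The missing ingredient is the paper's Lemma~\ref{lem:srz-finiteness-lemma}. By induction on $k<\omega$ and the side to move, every position $q$ of value $k$ has a \emph{finite, position-dependent} set $S(q)$ such that every position agreeing with $q$ on $S(q)$, with no larger fifty-move counter, has value at most $k$.
\begin{enumerate}[(i)]
\item At a White node, take $S$ of the position after one chosen value-decreasing move, together with the squares governing that move's legality, wherever they are. This is what absorbs far-away tempo and counter-resetting moves.
\item At a Black node, take $S$ of the pass position, $S$ of the finitely many positions reached by Black moves that alter that set, all squares within one move of these, and the squares of one witness non-pass Black move to exclude stalemate.
\end{enumerate}
With this lemma, ``distant'' simply means ``not touching the finite set $S(q')$'' of the pass position $q'$. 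Any such move leads directly to a position of value at most $\beta$, and no strategy copying is needed. Your counting argument then finishes the proof exactly as in the paper.
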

To prove this, we need the following lemma:

    \begin{lemma} \label{lem:srz-finiteness-lemma}
        If a position $q$ in short-range infinite chess with passing for Black has game value $k < \omega$ for White, then there exists a finite subset $S(q)$ of the board $\mathbb Z^2$ such that any position having the same arrangement of pieces on $S(q)$ as $q$ and the same or smaller fifty-move rule counter as $q$, has game value at most $k$ for White.
    \end{lemma}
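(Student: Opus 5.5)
We argue by induction on $k$, constructing $S(q)$ explicitly at each stage; the key point is that in short-range chess every single move is determined by, and only affects, a bounded neighbourhood of the squares involved. Throughout, for a finite set $A\subseteq\mathbb Z^2$ let $N_r(A)$ denote the set of squares within Chebyshev distance $r$ of $A$.

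\emph{Base case $k=0$.} Here White has already won, i.e.\ Black is checkmated in $q$ (or, in the Gale-Stewart encoding, Black has made an illegal move; that case concerns move history, not the current position, so we may assume checkmate). Checkmate is a local condition: take $S(q)=N_8(\{\text{black king}\})$. The king is in check from some piece within distance $7$ of the king square (all pieces are short range), and each of the king's escape squares is attacked from within distance $7$ of that square. Capturing or blocking the check involves only pieces whose moves reach squares within that radius, so they lie in $S(q)$. Any position agreeing with $q$ on $S(q)$ is therefore also checkmate. The fifty-move counter is irrelevant to an existing mate.

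\emph{Inductive step.} Suppose the claim holds for all values below $k$.

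If it is White's turn in $q$, fix a move $\mu$ leading to $q'$ of value $k-1$, and set $S(q)=N_{8}(\text{squares of }\mu)\cup S(q')$. In any position $p$ agreeing with $q$ on $S(q)$ with counter no larger:
\begin{itemize}
\item $\mu$ is legal, since legality and absence of self-check depend only on squares near the moving piece and the white king, and we include $N_8$ of the king as well;
\item the resulting $p'$ agrees with $q'$ on $S(q')$;
\item $p'$ has counter at most that of $q'$, because $\mu$ resets the counter in $p$ exactly when it does in $q$.
\end{itemize}
By induction $p'$ has value at most $k-1$, so $p$ has value at most $k$.

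If it is Black's turn, the difficulty is that Black may have infinitely many moves. Since the value of $q$ is the finite number $k$, which is a supremum of values $<k$, every Black reply (including passing) leads to value at most $k-1$. The main obstacle is to find finitely many representative replies whose sets $S(\cdot)$ cover everything. Black moves made far from the relevant region should act like a pass.

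Here the passing option is essential. Let $q_\emptyset$ be the position after Black passes. It has value at most $k-1$, and the induction gives the finite set $S(q_\emptyset)$. Every Black move whose from- and to-squares lie outside $N_8(S(q_\emptyset)\cup\{\text{both kings}\})$ yields a position that agrees with $q_\emptyset$ on $S(q_\emptyset)$. Its counter is at most that of $q_\emptyset$, since a pass increments the counter, and a pawn move or capture can only reset it. Hence all such replies have value at most $k-1$. This is also where the non-reset convention for passes matters.

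The remaining Black moves start or end in a fixed finite region. By short range there are only finitely many of them. For each such move $\nu$ we add $N_8(\nu)\cup S(q_\nu)$. We must also freeze enough squares that no new Black move appears near the region in a position $p$; we include a margin of $8$ around the finite region, so the set of near moves in $p$ coincides with that in $q$.

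The resulting finite union $S(q)$ works. In any $p$ agreeing with $q$ on $S(q)$, every Black reply either has a counterpart among the finitely many $q_\nu$, or it is pass-like and is controlled by $q_\emptyset$. In both cases the reply has value at most $k-1$. The Black pass is also available in $p$, and $p$ is not stalemate, since a pass is always legal and stalemate only occurs when passing is the sole option; this case agrees with $q$ locally near the black king. So $p$ has value at most $k$.

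I expect the hardest step to be the careful bookkeeping of check and legality for distant moves, in particular discovered checks. This is handled by including neighbourhoods of both kings in the frozen set.
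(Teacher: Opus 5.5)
Your overall architecture matches the paper's: fix a value-reducing move for White, and for Black compare far moves with the pass position $q_\emptyset$ while treating the finitely many nearby moves individually. However, the Black step has two genuine gaps.

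First, the value bookkeeping is wrong. Under Definition~\ref{def:game_values}, a Black-to-move position's value is the supremum of its successors' values, each of which is $\le k$; only White's moves lower the value. Your justification that $k$ is ``a supremum of values $<k$'' cannot hold for finite $k\ge 1$, since a supremum of naturals below $k$ is at most $k-1$. So $q_\emptyset$ and the $q_\nu$ may well have value exactly $k$, and your hypothesis ``the claim holds for all values below $k$'' does not apply to them. You need to induct on the value together with the side to move, as the paper does. The White-to-move case at level $k$ only invokes a Black-to-move position of value $k-1$, so it can be proved first. The Black-to-move case at level $k$ then shows every reply in $p$ has value $\le k$, and hence so does $p$.

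Second, your non-stalemate claim is unsupported. If passing is Black's only legal option in $p$, then $p$ is a drawn stalemate with no value at all. Stalemate is a global condition, so agreement near the Black king cannot rule it out. Suppose every legal non-pass move of Black in $q$ starts and ends far from $S(q_\emptyset)$ and the kings; for instance, his king is boxed in and his only mobile unit is a distant pawn. Then a position agreeing with $q$ on your $S(q)$ but lacking that pawn is stalemate. The paper closes this by adding to $S(q)$ the squares involved in the legality of one fixed non-pass Black move in $q$, which exists because $q$ is won for White.

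A smaller slip is in the base case. A Black piece able to capture the checker or interpose may sit up to $14$ squares from the Black king, so $N_8$ is too small; a radius of about $15$ works.
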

    \begin{proof}
        
     We shall prove this statement by induction on $k$ and the player to move, using the fact that the maximum move distance of all pieces on the board is bounded by $7$.
    
    When $k=0$, by definition Black is to move and is checkmated, which only depends on the pieces in a finite area around Black's king. 
    
    If White is to move in position $q$ of game value $k$, then by definition White may move to some position $q'$ of game value $k-1$ (Note that this move might need to be very far away from Black's king due to a need to reset the fifty-move rule counter). We then may take $S(q)$ to be the union of the squares in $S(q')$ together with those finitely many squares involved in the legality of White's move, noting that these squares must include a finite area around White's king to ensure it is not in check after the move. Then from any position matching $q$ on $S(q)$, White may play the same move as in $q$ to reach a position matching $q'$ on $S(q')$, which is then mate in $k-1$ by the definition of $S(q')$.
    
    If Black is to move in a position $q$ of game value $k$, let $q'$ be the position reached by a Black pass and $\{q_i\}$ the finite collection of positions Black may move to by changing the arrangement of the pieces on $S(q')$. Then we may take $S(q)$ to be all squares within a single move's distance of $S(q')$ or one of the $S(q_i)$, as well as all squares involved in the legality of some arbitrarily chosen non-pass move for Black. $S(q)$ is then certainly finite. If a position agrees with $q$ on $S(q)$, then Black is not in stalemate and so must move or pass. Black moves not involving the squares in $S(q')$ will result in a position matching $q'$ on $S(q')$, while Black moves involving it necessarily lead to a position matching one of the $q_i$ on $S(q_i)$. Thus the game value of this position is also at most $k$. This concludes the induction.
\end{proof}
    We now prove Theorem~\ref{thm:short-range-zugzwang}.
    \begin{proof}[Proof of Theorem~\ref{thm:short-range-zugzwang}]
    We may assume White restricts play to only moves which reduce the game value. In this way we need not worry about the threefold repetition rule and so may ignore how we reached a given position without worry. Suppose for the sake of contradiction that there existed a position $p$ with game value $\alpha \ge \omega$. Then, by an easy induction, in play stemming from $p$ Black may force a position $p'$ of game value $\omega$. By the definition of game values, Black is necessarily to move in $p'$ as $\omega$ is not a successor ordinal. We are thus reduced to showing no position $p'$ with game value $\omega$ and Black to move can exist.
    
    As a Black pass in $p'$ results in a position $p''$ of game value $k < \omega$ by definition, Lemma~\ref{lem:srz-finiteness-lemma} grants us a finite subset $S(p'')$ of the board such that any position agreeing with $p''$ on $S(p'')$ has game value bounded by $k$. However, as $p'$ agrees with $p''$ on $S(p'')$ and Black has at most finitely many moves which modify the pieces on $S(p'')$, Black can move to at most finitely many positions with game value greater than $k$. As these positions will still have finite game value, we have a contradiction, as $\omega$ is not the supremum of finitely many natural numbers. Thus, no such $p'$ and hence no such $p$ may exist.
\end{proof}
The reason for considering the game with passes allowed for Black in this particular manner is that we have only improved Black's prospects, so in particular we may deduce facts about game values in the game without passes.
\begin{corollary}
If a position $p$ in short-range infinite chess has game value $\alpha \ge \omega$ for White, then $p$ is not winning for White in short-range infinite chess with passing for Black.
\end{corollary}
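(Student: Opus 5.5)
We argue by contraposition, deducing the statement directly from Theorem~\ref{thm:short-range-zugzwang}. Suppose $p$ is winning for White in short-range infinite chess with passing for Black. We will show that the game value of $p$ in ordinary short-range infinite chess is then finite, so it cannot be $\alpha \ge \omega$. The guiding idea is that the passing variant only enlarges Black's options, so its game tree dominates that of the ordinary game.

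The first step is to compare game values in the two variants by induction on the game value $\beta$ of positions in the passing variant. We claim that every position $q$ with game value $\beta$ for White in the passing variant is also winning for White in ordinary short-range infinite chess, with game value at most $\beta$ there. We treat the three kinds of positions in turn.

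\begin{enumerate}[(a)]
\item \emph{White to move.} White plays the same move that reduces the value in the passing variant. The legal moves for White are identical in the two variants, and the fifty-move counter and repetition bookkeeping are unaffected by the mere availability of passing.
\item \emph{Black to move, not stalemated in the ordinary game.} Every ordinary Black move is also available in the passing variant. Hence every successor of $q$ has passing-variant value below $\beta$, and by induction its ordinary value is below $\beta$ as well.
\item \emph{Black checkmated.} Checkmate is identical in both variants, so the value $0$ transfers.
\end{enumerate}

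The one delicate case is a Black-to-move position in which Black's only legal move in the passing variant is to pass. By definition such a position counts as a draw by stalemate. It therefore has no game value for White, so it never occurs on the winning side of the induction.

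The repetition rule needs separate attention, because a pass changes the repetition history. As in the proof of Theorem~\ref{thm:short-range-zugzwang}, we may assume White plays only value-reducing moves. Such play strictly decreases an ordinal at every White turn, so no position repeats along White-winning play, and threefold repetition never intervenes. The fifty-move counter needs no separate argument: positions and counters are shared between the variants, and passing does not reset the counter.

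The conclusion then follows. By Theorem~\ref{thm:short-range-zugzwang}, a position $p$ that is winning in the passing variant has some finite value $k$ there. The comparison above shows that $p$ has ordinary value at most $k < \omega$. This contradicts the assumption that its value is $\alpha \ge \omega$.

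The main obstacle is making the monotonicity precise: Black gains options while the positions, counters and repetition histories stay aligned between the two games. Carrying out the induction on the passing-variant value handles this cleanly.
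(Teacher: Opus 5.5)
Your proposal is correct and is essentially the paper's argument: passing only enlarges Black's options, so the finite passing-variant value given by Theorem~\ref{thm:short-range-zugzwang} bounds the ordinary value. You phrase it contrapositively and make that monotonicity explicit by induction, where the paper simply notes that Black can survive any finite number of turns without passing.

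One small repair: in case (b) the successors of a Black-to-move node have value \emph{at most} $\beta$, not strictly below it, since the value there is a supremum. So induct on $\beta$ together with the player to move, as in Lemma~\ref{lem:srz-finiteness-lemma}. The repetition detour is also unnecessary, because values are attached to plays and a pass-free play has the same draw status in both variants.
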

\begin{proof}
    As $\alpha \ge \omega$, Black can survive for any finite number of turns without passing. Therefore by Theorem~\ref{thm:short-range-zugzwang}, Black has some way to survive forever when allowed to pass.
\end{proof}
In other words, zugzwang is the only possible mechanic we can use for constructing transfinite game values in short-range infinite chess. Morally, the compulsion to move is the only mechanic of short-range chess that can force the defending side to make a truly infinite choice --- all other rules are either local in nature or are draw rules which mostly hinder the attacking side. This observation is not unique to chess, and the broad strategy of modifying the rules to remove non-local behavior and then trying to prove some sort of compactness theorem can be applied to a number of infinite games. For example, for many desirable constructions in infinite go, a version of this proof reveals that non-local interactions can only be mediated via infinitely large groups or the ko rule. A similar argument appears in \cite{hamkins2023hex}, where it was shown that infinite hex and other simple stone-placing games are of an intrinsically local nature in the sense that winning positions are entirely decided by play in a finite area of the board, and thus do not admit transfinite game values.

With the restrictions stemming from Theorem~\ref{thm:short-range-zugzwang} in mind, we will nonetheless prove that short-range infinite chess is very powerful and, in particular, all countable ordinals arise as game values, even when restricting our chess arsenal to only pawns and kings:

\secondthm*

We will prove Theorem~\ref{thm:game-values-pawns} by embedding a collection of games into the infinite chessboard, which are simple enough to construct solely out of pawns and kings using the zugzwang idea, while being sufficient to exhaust all countably infinite game values:

\begin{definition}[\emph{Choosing-nodes-from-$Z$ game}] \label{def:choosing_from_z_game}
    Let $T$ be the full binary tree i.e., $T = \{0,1\}^{<\omega}$. Let $Z$ be an infinite subset of $T$. To $Z$ we associate the \emph{choosing-nodes-from-$Z$ game} as follows: On his turn, Black has to choose a node from $Z$ that he has not chosen before. White does not have any meaningful moves and is only watching Black make his moves. The game continues until Black has chosen two nodes over the course of the game such that neither node is an initial segment of the other. In that case, the game ends immediately and White wins. Otherwise, Black draws by infinite play i.e., by successfully naming infinitely many nodes from $Z$ that all lie on the same branch. 
\end{definition}

The requirement that $Z$ be infinite is not especially important, but will be needed to avoid stalemate concerns for Black once we begin embedding these games into chess. We call a node of the binary tree a $\emph{Z-node}$ if it is contained in the set $Z$. Nodes not in $Z$ are called \emph{normal nodes}. Any normal node which does not have any Z-nodes as descendants is irrelevant to the game, so we omit such nodes in all figures.

Since a win for White in this game only occurs at a finite stage, if it occurs, this \emph{choosing-nodes-from-$Z$} game is necessarily open, and so the theory of ordinal game values applies. We now show that arbitrarily large countably infinite ordinals arise as game values of the \emph{choosing-nodes-from-$Z$} game.

\begin{figure}[!ht]
    \centering
    \includegraphics[width=0.5\linewidth]{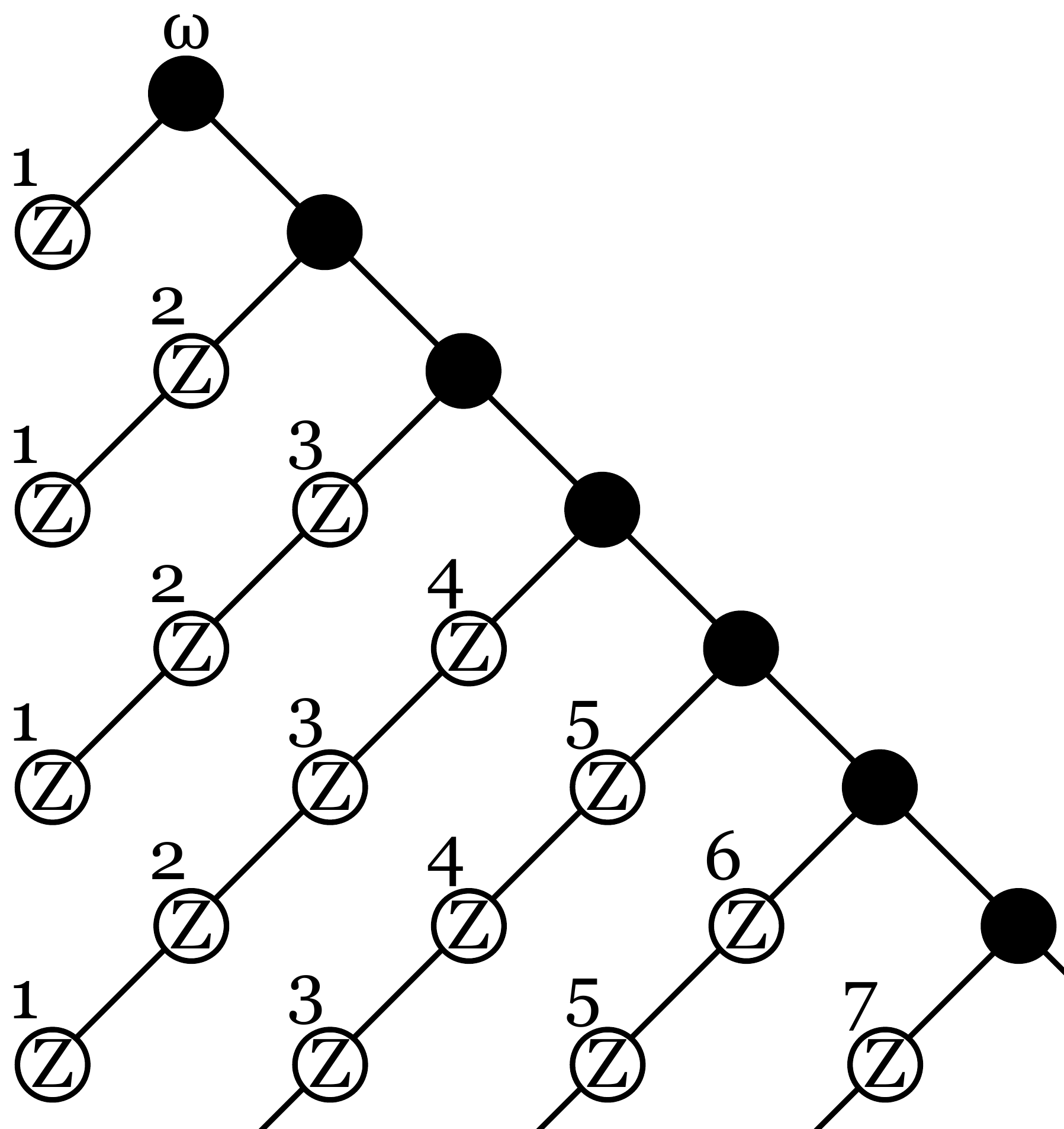}
    \caption{\emph{Choosing-nodes-from-$Z$} game with game value $\omega$}
    \label{fig:binary-tree-game-value-omega}
\end{figure}

\begin{lemma}
\label{lem:choosing-nodes-game}
    For any countable and infinite ordinal $\alpha$, there exists an infinite set $Z \subseteq \{0,1\}^{<\omega}$ such that the corresponding \emph{choosing-nodes-from-$Z$} game has game value for White at least $\alpha$.
\end{lemma}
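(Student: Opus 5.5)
The plan is a transfinite induction on $\alpha$. I will build explicit sets $Z_\alpha$ by grafting copies of earlier sets under suitable prefixes of the binary tree. The inductive claim I will carry is slightly stronger than needed: for every countable ordinal $\alpha$ (finite or infinite) there is an infinite $Z_\alpha\subseteq\{0,1\}^{<\omega}$ whose \emph{choosing-nodes-from-$Z_\alpha$} game, started with Black to move and nothing chosen, is either winning for White with game value at least $\alpha$, or (for $\alpha=0$) arbitrary.

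Throughout, a state of the game is described by the finite chain $C$ of nodes Black has already chosen. Two standard facts about the values of Definition~\ref{def:game_values} will be used.
\begin{itemize}
\item[(a)] If Black is to move, the value of the state is the supremum of the values after each of his legal choices. In particular, exhibiting one choice of Black leading to value at least $\beta$ gives a lower bound of $\beta$.
\item[(b)] The value of a state depends only on the isomorphism type of the remaining game.
\end{itemize}
White's dummy moves only shift values by a uniform successor step, which I will track but which does not affect lower bounds.

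\textbf{Grafting lemma.} For a prefix $w$ and a set $Z$, let $wZ=\{wu : u\in Z\}$. Suppose $Z'\supseteq wZ$, no element of $Z'\setminus wZ$ is comparable with any node of $wZ$ other than through prefixes of $w$, and no proper prefix of $w$ lies in $Z'$. Then, once Black's first choice lies in $wZ$, the rest of the game in $Z'$ is isomorphic to the game in $Z$ after the corresponding choice. This holds because the legal choices thereafter are exactly the nodes of $wZ$ comparable with everything chosen so far. This grafting lemma is the main point requiring care. With (a), it gives that the value of the $Z'$-game is at least that of the $Z$-game.

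\textbf{Base case.} Take $Z_0=\{0^n:n\ge 0\}$, which is infinite.

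\textbf{Successor case.} Set $Z_{\alpha+1}=\{1\}\cup 1Z_\alpha$, shifting $Z_\alpha$ by one extra level so that $1\notin 1Z_\alpha$; concretely, $Z_{\alpha+1}=\{1\}\cup 10Z_\alpha$. Black may open by choosing $1$. After White's dummy move, the position is, by the grafting lemma, exactly the starting position of the $Z_\alpha$ game. So the value is at least $\alpha+1$, using the definition of value at White's turn: White's only move leads to a state of value at least $\alpha$.

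\textbf{Limit case.} For limit $\alpha=\sup_n\alpha_n$, place the copies along a spine: $Z_\alpha=\bigcup_n 1^n0\,Z_{\alpha_n}$. The prefixes $1^n0$ are pairwise incomparable, and none of the spine nodes $1^j$ lies in $Z_\alpha$. Hence any first choice inside copy $n$ confines the rest of play to that copy, isomorphically to the $Z_{\alpha_n}$ game after the same first choice. By (a), the value is at least $\sup_n(\text{value of }Z_{\alpha_n})\ge\sup_n\alpha_n=\alpha$.

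All sets built are infinite, since each contains an infinite $Z_0$-copy. Finally, I check that these lower-bound values are genuinely attained, i.e.\ that White actually wins. Every $Z_\alpha$ with $\alpha\ge1$ is winning for White: inductively, in each copy Black is eventually forced to pick incomparable nodes. For this I must make $Z_0$ itself White-winning rather than an infinite chain. So I will instead take as base $Z_1=\{\varepsilon,0,1\}\cup 00\{0,1\}\cdots$, some finite-depth antichain-forcing set made infinite by a width-$\omega$ level, e.g.\ $\{0^n1 : n\ge0\}$, where any two choices are incomparable and White wins in one move.
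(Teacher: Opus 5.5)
Your construction is the paper's: successors put a new Z-node above a copy of the old tree, and limits hang copies below pairwise incomparable nodes. It is correct, but only with the repair in your last paragraph, and that repair is essential rather than cosmetic. With $Z_0=\{0^n:n\ge 0\}$, every $Z_n$ is itself an infinite chain, and every copy in the limit stage contains one. So Black draws in all of your games and no game value exists at all.

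To clean this up:
\begin{enumerate}[(i)]
\item State the induction from $\alpha=1$, with base the infinite antichain $\{0^n1:n\ge 0\}$, and drop the garbled first description $\{\varepsilon,0,1\}\cup\cdots$.
\item Carry the invariant that every infinite branch of $\{0,1\}^{<\omega}$ meets $Z_\alpha$ in only finitely many nodes, which is exactly the condition that White wins, together with the value bound.
\item Take $\alpha_n\ge 1$ in the limit step.
\end{enumerate}

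Your grafting lemma also does not cover the successor step as stated. There $1$ is a proper prefix of $w=10$ that lies in $Z_{\alpha+1}$, and Black's first choice $1$ is not in $wZ_\alpha$. What you actually need is simpler: after Black names $1$, the rest of the game is a relabelled copy of the $Z_\alpha$ game, because $1$ is comparable with every node of $10Z_\alpha$.

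The paper instead starts directly at $\omega$, using finite chains of lengths $1,2,3,\dots$ on pairwise incomparable branches, so its finite-value pieces are finite sets and only the whole construction is infinite. Your antichain base keeps every $Z_\alpha$ infinite and also handles finite $\alpha$, at no extra cost.
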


\begin{proof}
For the base case of our transfinite induction, a \emph{choosing-nodes-from-$Z$} game with game value $\omega$ is shown in Figure~\ref{fig:binary-tree-game-value-omega}. The relevant parts of the binary tree are pictured and Z-nodes are denoted by the letter $Z$, while normal nodes are denoted by black circles. The root node is labeled with $\omega$ to signify the game value of the tree below it. The full tree contains a branch with one Z-node, a branch with two Z-nodes, a branch with three Z-nodes etc, as pictured. Crucially, these important branches are arranged in such a way that their pairwise intersections only contain normal nodes. On his first move, Black is forced to name a Z-node lying on one of these branches, thus committing to it in the sense that he will lose the game if he ever names a node lying on a different branch. Since Black was able to commit to a branch containing an arbitrarily large finite number of Z-nodes on his first move, the total game value of the position is $\omega$.

\begin{figure}[!ht]
    \centering
    \begin{subfigure}{0.2446\textwidth}
        \centering
        \includegraphics[width=\linewidth]{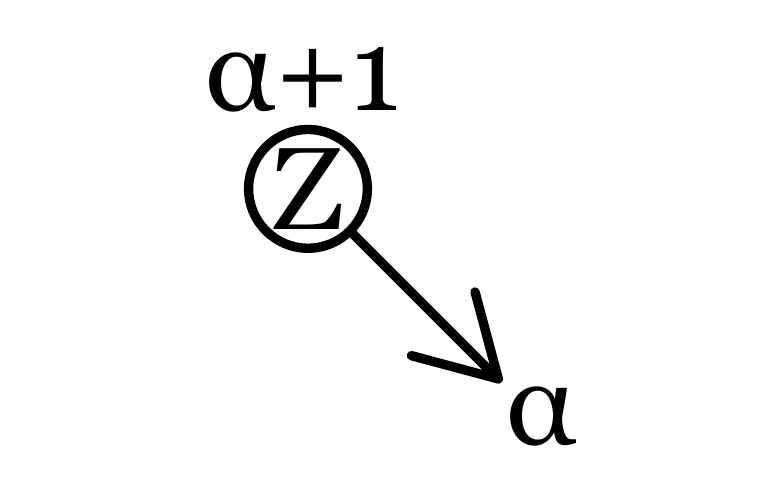}
        \caption{Successor ordinal}
        \label{fig:binary-tree-successor-ordinal}
    \end{subfigure}
    \;\;\;\;\;\;
    \begin{subfigure}{0.45\textwidth}
        \centering
        \includegraphics[width=\linewidth]{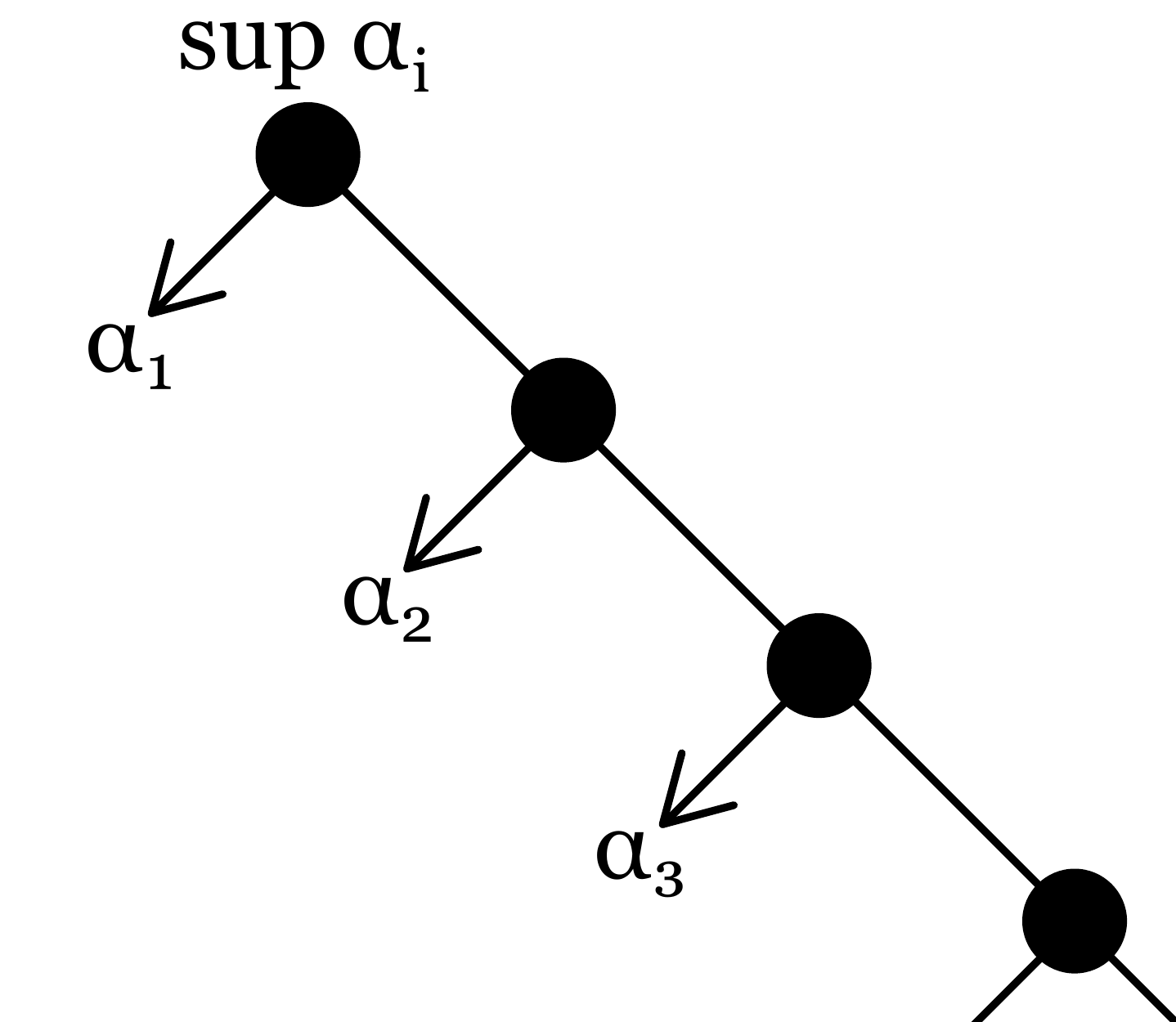}
        \caption{Limit ordinal}
        \label{fig:binary-tree-limit-ordinal}
    \end{subfigure}
    \caption{Construction of ordinal game values in the \emph{choosing-nodes-from-$Z$} game. The root node is labeled with a lower bound for the total game value of the tree, while subtrees are labeled by their respective game values.}
    \label{fig:binary-tree-ordinals}
\end{figure}

For the successor ordinal case, given some tree with corresponding game value $\alpha$, we may construct a tree with game value at least $\alpha+1$ as in Figure~\ref{fig:binary-tree-successor-ordinal}. A Z-node has been placed at the root of the tree, below which the game value $\alpha$ tree is embedded as a subtree. Black will clearly still lose this game, and the game value is at least $\alpha+1$ since Black can play at the root for his first move, resulting in a position of game value $\alpha$.

The limit ordinal case works in the same way as the game value $\omega$ construction. Given a countable collection of these games with the $i$th game having game value $\alpha_i$, we consider the tree depicted in Figure~\ref{fig:binary-tree-limit-ordinal}. As all of Black's moves except the losing one must take place in one of the subtrees corresponding to some $\alpha_i$, Black will still lose this game and so the position still has a game value. As Black is free to choose whichever $\alpha_i$ he pleases, the total game value is at least $\sup \alpha_i$. Therefore, arbitrarily large countably infinite ordinals are indeed realized as game values of the \emph{choosing-nodes-from-$Z$} game.
\end{proof}

\subsection{Infinite Chess Construction}
\label{sec:pawn-tree-chess}

We will now prove Theorem~\ref{thm:game-values-pawns} by turning our attention to the realization of the \emph{choosing-nodes-from-$Z$} games in infinite chess. As before, we will achieve this through the construction of various small gadgets which can be assembled together to form any instance of the \emph{choosing-nodes-from-$Z$} game. When doing the analysis of these gadgets we must consider cases where one player passes locally by playing some move somewhere outside the figure. We tacitly assume that such moves do not end the game prematurely, lead to infinitely long forced sequences, or cause a black piece to enter the relevant figure. These assumptions will be justified upon assembly of the final position. 

The branches of the tree we embed will be made out of pawn chains. \begin{definition}
    A \emph{pawn chain} is a configuration of white and black pawns on the chessboard such that \begin{enumerate}[(i).]
        \item In front of each pawn is a pawn of the opposite color blockading it.
        \item Every black pawn save the northernmost one is defended by a unique black pawn in the chain.
        \item No pawn of any color in the chain is defended or attacked by any black pawn outside the chain.
    \end{enumerate}
    We call the northernmost black pawn the \emph{base pawn} of the chain. If we replace one of the black pawns in a pawn chain by a blank square or a white pawn, we say the chain is \emph{activated}.
\end{definition}
An example of a pawn chain with base pawn {\tt b16} is shown in Figure~\ref{fig:Pawn-chain}. Pawn chains are useful due to the following properties:

\begin{lemma} \label{lem:pawn_chains}
    White can force a white pawn onto the square of the base pawn of any activated chain. Furthermore, White can also play moves elsewhere on the board between each move he plays in the chain, without damaging his ability to eventually force a pawn onto the base square.
    If Black is to move then Black can ensure that the only file White can create a free pawn on is the file of the base pawn.
\end{lemma}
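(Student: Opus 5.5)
The plan is to analyze the chain directly from the three defining properties, treating it as a sequence of blockaded pawn pairs. Because black pawns move south and white pawns north, each white pawn in the chain sits directly south of a black pawn. By (ii), the black pawns form diagonal links, each defended by the next one to the north, ending at the base pawn. Activation removes (or turns white) one black pawn $b$ in this sequence. The proof of the first claim is an induction that walks northward along the chain toward the base square.

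For the first claim, I would look at the white pawn $w$ that was blockaded by the missing black pawn, i.e.\ the one directly south of $b$'s square. Now the square in front of $w$ is empty, or holds a white pawn that can itself continue. Its next diagonal capture target is the black pawn that defended $b$, which by (ii) is one step diagonally north. By (iii), no black pawn outside the chain defends or attacks anything in the chain, so this capture costs nothing. After the capture, the next black pawn up the chain is the one defending the captured pawn, and it sits diagonally ahead of the advanced white pawn. I would verify, with a figure-level case check on the two possible diagonal orientations, that after each capture or advance the white pawn again attacks the next link. Iterating reaches the base square in finitely many white moves.

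Black's only interference inside the chain is to capture the advancing white pawn with the next defender, or to move a blockaded pawn. I would argue that such a recapture only vacates another black square of the chain, which re-activates it further north. The invariant to maintain is ``the chain above some point is an activated chain with the same base pawn.'' Black moves elsewhere are irrelevant by (iii). Since White's progress toward the base never depends on tempo, White may interleave arbitrary moves elsewhere: the invariant is preserved by any Black move, so the second claim follows.

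For the last claim, with Black to move, I would show that Black can always answer a White capture in the chain by recapturing with the defending pawn. This keeps every white pawn that leaves its original file blockaded again by a black pawn of the chain, so no white pawn becomes free except one arriving on the base square, which has no black defender. The main obstacle I expect is exactly this bookkeeping: checking that Black's recaptures do not open a different file, for example a white pawn left behind on the original file of $b$ that is now unblockaded. I would handle it by a local case analysis on the two diagonal directions of each link. The fallback is to argue that any such stray pawn is again stopped by the chain pawn in front of it, using property (i).
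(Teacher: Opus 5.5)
Your argument for the first two claims is essentially the paper's. The front white pawn always attacks the next link. A Black capture only unblocks the next white pawn further north, and nothing depends on tempo.

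\textbf{The gap is in the third claim.} The defensive strategy you propose is the wrong one, and your fallback is false. Recapturing after a White capture does not contain White, because the danger is a straight advance, not a capture.

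Take the chain of the paper's figure: black pawns \texttt{i3}, \texttt{h4}, \texttt{g5}, \texttt{f6}, $\dots$ over white pawns \texttt{i2}, \texttt{h3}, \texttt{g4}, \texttt{f5}, $\dots$, activated by removing \texttt{i3}. After \texttt{i2}--\texttt{i3} the white pawn attacks \texttt{h4}. If Black does not take it at once, White plays \texttt{i4} and the pawn runs up the i-file. That is a free pawn on a file other than the base file.

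Property (i) cannot rescue this. The pawn that blockaded this white pawn was exactly the removed pawn $b$. The square in front of $b$'s square holds a chain pawn only where the chain bends back. At \texttt{e7}--\texttt{d8}--\texttt{e9}, for example, the white \texttt{e8}-pawn stands in front of \texttt{e7}. Along a straight diagonal stretch such as \texttt{i3}--\texttt{h4}--\texttt{g5} that square is empty.

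The same failure recurs at every later link. After \texttt{ixh4 gxh4}, the \texttt{g4}-pawn steps to \texttt{g5}, and if left alone it continues \texttt{g6}, \texttt{g7}, $\dots$.

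\textbf{The fix.} The strategy that works, and the one the paper uses, is for Black to capture the advancing white pawn the moment it lands on a square attacking the next link, that is, to capture whenever able. Under this rule White never actually captures inside the chain. Each Black capture vacates one chain square and unblocks exactly one further white pawn. That pawn can make one step before it is taken in turn. The only white pawn that survives is the one that steps onto the base square, which no black pawn attacks: the base pawn has no defender in the chain, and by (iii) nothing outside the chain attacks it.

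This is also why the hypothesis that Black is to move matters. If the activation left a white pawn standing on a chain square, Black must capture it before it advances.
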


\begin{proof}
    As the pawn chain is activated, White has a pawn in the chain which he may push, threatening one of Black's pawns, or he is already threatening one of Black's pawns. Black may choose to ignore the threatening pawn or take it. In the first case White may simply capture the threatened pawn, thus moving up the chain, while in the second case Black has freed a new white pawn, which now may move forward. In both cases, White can also choose to play moves elsewhere before returning to this pawn chain. Continuing up the pawn chain in this manner, White eventually lands a pawn on the square of the base pawn. Meanwhile, Black can ensure that this is the only way White can free a pawn by simply always choosing to capture when able.

\end{proof}

\begin{figure}[!ht]
    \centering
    \begin{subfigure}{0.40\textwidth}
        \centering
        \includegraphics[scale=0.8]{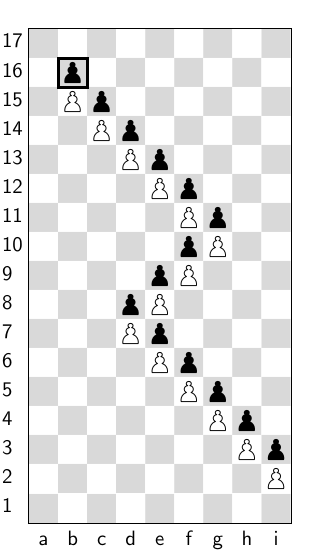}
        \caption{Pawn chain segment with base pawn b16}
        \label{fig:Pawn-chain}
    \end{subfigure}
    \;\;\;\;\;
    \begin{subfigure}{0.40\textwidth}
        \centering
        \includegraphics[scale=0.8]{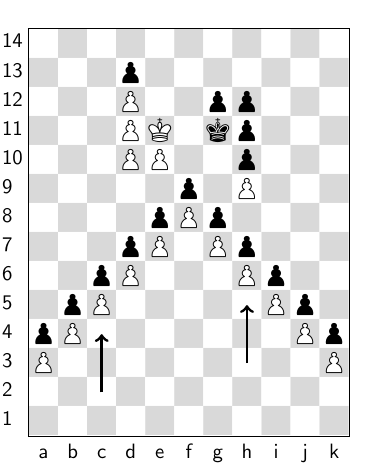}
        \caption{King chamber. The diagonal pawn chains continue forever.}
        \label{fig:King-chamber-pawns}
    \end{subfigure}
    \caption{Basic building blocks of the pawn tree}
    \label{fig:pawn-building-blocks}
\end{figure}

As an example, if the black pawn on {\tt i3} in Figure~\ref{fig:Pawn-chain} disappeared, Black would still be deadlocked, while White would have the option of advancing his pawn to {\tt i3} at any time. Now, no matter how Black reacts --- by capturing via {\tt hxi3} or by playing somewhere else on the board --- White will be able to move a pawn to {\tt h4} whenever he so chooses. In the following turns, White will be able to keep sacrificing his pawns in this manner to advance upward to {\tt g5, f6, e7, d8, e9,} and so on. Assuming that White does not care about the black pawns getting released downward, he can keep chewing his way upward through the pawn chain regardless of Black's play, eventually landing a pawn on {\tt b16}, the site of the base pawn.

The king chamber of the position is pictured in Figure~\ref{fig:King-chamber-pawns}. It is situated at the very top of the entire construction and features two deadlocked kings, neither of whom can make a move. Below the king chamber are two pawn chains with the same base pawn {\tt f9}, which we understand as continuing diagonally downward forever. We also require that the diagonal directly below each chain be empty. We will build the entire pawn tree below these main chains, and White's win condition will be to free a pawn from the tree in order to activate one of the king chamber's pawn chains.

\begin{lemma} \label{lem:bounded_king_chamber}
    Suppose neither player has any pawns above the pawn chains in the king chamber, besides those depicted in Figure~\ref{fig:King-chamber-pawns}. Then White can force a win if and only if White can force a white pawn onto one of the diagonals directly below these pawn chains. 
\end{lemma}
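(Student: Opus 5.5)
We prove both directions of Lemma~\ref{lem:bounded_king_chamber} directly, using the structure in Figure~\ref{fig:King-chamber-pawns} and Lemma~\ref{lem:pawn_chains}.

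\emph{If White gets a pawn onto a diagonal below a main chain.} Such a pawn sits on the empty diagonal directly below one of the two king-chamber chains. It can therefore advance until it attacks a black pawn of that chain, and so activate it; if Black captures instead, the chain is likewise left with a hole. By Lemma~\ref{lem:pawn_chains}, White can then chew his way up the activated chain and force a white pawn onto the base square {\tt f9}. From {\tt f9} the pawn advances (or captures) into the king chamber and gives a check the deadlocked black king cannot escape. Since neither king has any moves and no other piece can interpose or capture, this check is mate. I would verify this final step by inspecting the two or three concrete squares near the kings in the figure.

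While White does this, Black's possible counterplay has to be dismissed. Under the hypothesis that no pawns lie above the chains, Black has no pawn that can ever reach the vicinity of the white king. Black's freed pawns from the chain only move south, away from the chamber. So Black's moves elsewhere cannot generate a threat, and infinite play is excluded because White makes progress on every tempo. For the fifty-move rule, note that every White move in this phase is a pawn move or a capture.

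\emph{If White can never get a pawn onto those diagonals.} We show Black can hold a draw. The kings are permanently deadlocked, so mate of Black requires some white piece to reach a square adjacent to or attacking the black king. The only white pieces are the king, which is frozen, and pawns. Pawns above the chains are exactly those in the figure, and they are blockaded. Any other white pawn lies below the chains and moves north, so to reach the chamber it must cross the two infinite diagonal chains. Because the chains extend infinitely in both directions and pawns advance one rank per move, a northbound pawn can only get past them by capturing into a chain. Doing so requires first standing on the diagonal directly below the chain, or capturing a chain pawn from a square on that diagonal, which the chain pawns themselves guard. As long as no white pawn ever reaches such a square, the chains remain intact and deadlocked, and Black is never mated.

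To avoid losing by stalemate, Black needs some non-suicidal move; here I would appeal to the tacit assumption that stalemate and moves outside the figure are handled upon assembly. Black also captures whenever a chain is threatened, per Lemma~\ref{lem:pawn_chains}, so that no free pawn appears on another file.

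The main obstacle is the second direction: making rigorous that entering a chain is impossible without first occupying the diagonal below it. I would argue this by a parity and geometry check. A white pawn attacking a chain pawn must stand diagonally below and beside it, and every such square lies on the diagonal directly below the chain; the other adjacent square is already occupied by the chain. Hence the first contact with a chain must happen from that diagonal, which is exactly the condition in the statement.
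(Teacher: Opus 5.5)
Your proposal is correct and follows essentially the same route as the paper: a white pawn on the empty diagonal activates the chain, Lemma~\ref{lem:pawn_chains} carries a pawn to {\tt f9} and then {\tt f10\#}, and Black's southbound pawns cannot interfere; conversely, no white pawn can reach the chamber without first touching a chain from that diagonal. Your explicit geometric argument for the converse simply fleshes out the paper's one-line justification, and the only slips are cosmetic. A pawn on that diagonal cannot advance (a white chain pawn stands in front of it), but it already attacks a chain pawn, so nothing is lost. A stalemate of Black would be a draw, not a loss, so that remark is unneeded.
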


\begin{proof}
    If White ever has a pawn reach the necessary diagonal, then on his next turn he can force the situation of Lemma~\ref{lem:pawn_chains}, and so is guaranteed to eventually land a white pawn on {\tt f9}, with the unstoppable {\tt f10\#} next move. Since the rest of the position is below the game-deciding pawn chain and only consists of pawns, Black is unable to stop his defeat once a white pawn reaches the necessary diagonal. After all, he can only ever move pawns downward and thus he will never be able to influence the pawn chain. White also cannot hope to win if White cannot get a pawn onto one of these diagonals, since otherwise there is no way to free any pawn capable of checking the Black king. 
\end{proof}

The complete binary tree from \emph{choosing-nodes-from-$Z$} will be represented on the chessboard as a tree of pawns built below the game-deciding pawn chain of the king chamber. We have already seen the pawn chains which will form the edges of the tree, so we now construct components representing the nodes of this tree.
A normal binary node is pictured in Figure~\ref{fig:binary-node}, and can be understood as two pawn chains which overlap. Lemma~\ref{lem:pawn_chains} implies that if either of these pawn chains is active, then White can force a pawn to the base of the pawn chain that exits the diagram at {\tt a13}. Key to our construction will be the situation in which both of these pawn chains are activated below the node. 

\begin{figure}[!ht]
    \centering
    \includegraphics[scale=0.85]{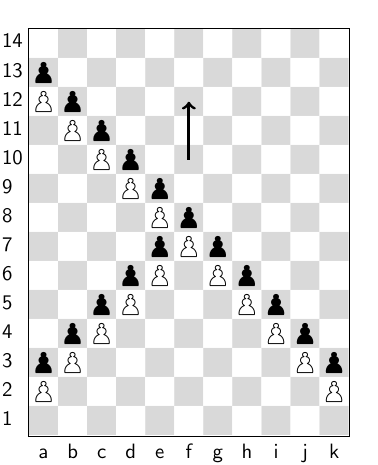}
    \caption{Binary node}
    \label{fig:binary-node}
\end{figure}

\begin{lemma} \label{lem:normal_node}
    Suppose the pawn chains of Figure~\ref{fig:binary-node} through {\tt k3} and {\tt a3} are both activated below {\tt f8}. Then White can force a pawn out of the top of the diagram along the f-file.
\end{lemma}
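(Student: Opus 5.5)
We cannot see Figure~\ref{fig:binary-node}, so the plan is phrased in terms of its described structure. The node consists of two pawn chains, one coming in through {\tt k3} and one through {\tt a3}. Above the node they merge into a single chain that leaves the diagram at {\tt a13}, and the two incoming chains overlap in the region around {\tt f8}. The idea is to use each activated chain in turn via Lemma~\ref{lem:pawn_chains}. Each of the two chains, viewed as a pawn chain whose base pawn sits at or just below the overlap near {\tt f8}, lets White force a white pawn onto that base square. The two chains approach from opposite diagonal directions. So the first step is to apply Lemma~\ref{lem:pawn_chains} to each chain separately: White chews up the {\tt k3} chain until a white pawn lands on its base square, and does the same with the {\tt a3} chain. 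Because that lemma allows White to interleave moves elsewhere, the two advances can be carried out in either order or alternately.

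The second step is to check what happens when the two white pawns arrive at the top of the overlap. Arriving from both sides should mean the black pawn(s) blockading the f-file above {\tt f8} are attacked from two directions, or that after one capture a second white pawn recaptures. In a single chain, Black neutralizes each white advance by capturing, which keeps the white pawn on the chain's diagonal. With two chains converging, the second white pawn can recapture onto the f-file itself, or occupy the square Black would need for the capture. The result is a white pawn on the f-file with an empty square in front of it. White then pushes it straight up the f-file and out of the top of the diagram.

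I would do a short case analysis on Black's replies at the convergence point. Black may capture the first arriving pawn, ignore it, or make a pawn move inside the node. Each case should leave a white pawn either already on the f-file or capturing onto it next move. Black's only pieces here are pawns moving south, so nothing can re-block the f-file from above once it is open.

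The main obstacle is this convergence step: verifying from the exact pawn placement that Black has no defensive resource in the overlap. The danger is that a capture with the shared black pawn might simultaneously stop both white pawns, or that a black pawn push might re-establish a blockade on the f-file. I would first try to show that the shared black pawn can make at most one capture, so the second white pawn always gets through. Timing should not matter, since White may wait and Black, by the tacit assumption, has moves elsewhere that do not affect the node.
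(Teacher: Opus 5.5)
Your overall architecture matches the paper's: march up both chains as in Lemma~\ref{lem:pawn_chains}, then use the double attack around {\tt f8}. The gap is the step you treat as free, which is exactly where the paper's key idea lies. You claim that, because Lemma~\ref{lem:pawn_chains} lets White interleave moves elsewhere, the two advances can be done in either order or alternately. That clause only concerns White's moves elsewhere. The lemma's proof tacitly relies on condition (iii), that no outside black pawn ever attacks or defends a pawn of the chain.

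Chewing up a chain releases a horde of black pawns, which then march south and capture diagonally. Pawns freed from one chain can therefore come to attack or defend pawns of the other chain. For instance, an extra black defender of the pawn White is about to capture lets Black recapture without freeing the next white pawn, and that chain stalls. The activation points may lie anywhere below {\tt f8}, including far outside the figure where the chains' geometry is uncontrolled, so you cannot dismiss this. The paper's fix is to always move in the chain whose activated point is currently lower. The released black pawns then lie below the live part of the other chain and, moving only south, can never influence it. The same precaution reappears in the proof of Lemma~\ref{lem:pawn-tree-sim2}.

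Your convergence step is left as a hope rather than an argument, and one claim there is false. Black pawns above can re-block the f-file: the next pawn of the merged chain, on {\tt e9}, can capture onto {\tt f8}. The paper handles this with an explicit stopping condition. White marches until either white pawns stand on both {\tt e7} and {\tt g7}, or one of them holds a white pawn and Black has played {\tt fxe7} or {\tt fxg7}, vacating {\tt f8}. White then plays {\tt exf8} or {\tt f8} respectively, threatening {\tt f9}. If Black answers {\tt exf8}, the remaining white pawn recaptures the now unguarded pawn, and the escape is unstoppable.

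Relatedly, you cannot literally apply Lemma~\ref{lem:pawn_chains} to a truncated chain with base near {\tt e7} or {\tt g7}. Those pawns are defended by the outside pawn on {\tt f8}, violating condition (iii). This is why the paper uses the stopping condition above instead.
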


\begin{proof}
    White first marches up the two chains as in Lemma~\ref{lem:pawn_chains}, giving priority to the chain activated in the lower rank.  The purpose of giving priority to the lower chain is so that the black pawns released in the process are below the other chain and cannot influence it. White continues this procedure until either there are white pawns on both {\tt e7} and {\tt g7}, or there is a white pawn on one of {\tt e7} and {\tt g7} and Black has played one of {\tt fxe7} or {\tt fxg7}, leaving {\tt f8} vacant. White then may play {\tt exf8} in the first case and {\tt f8} in the second, in both cases threatening to push {\tt f9} and subsequently escape out of the top of the diagram. If Black delays matters with {\tt exf8}, then in both cases White has a move to recapture the now unguarded pawn on {\tt f8}, after which escape along the f-file is unstoppable.
\end{proof}

Our goal is now to arrange the position so that White liberating any pawn from a normal node along its f-file guarantees White's victory. By Lemma~\ref{lem:bounded_king_chamber}, White will win if he can march a pawn to the game deciding pawn chain at the top of the position. As there may be other pawn chains above the node where the pawn is released, to allow such a march we need a mechanism that lets a white pawn below a pawn chain release another white pawn above the chain. This is achieved by the position of Figure~\ref{fig:pristine-white-pawn-break}, as we now argue. The idea is to have the first pawn free a large enough horde of pawns under the chain, which can then force their way through.

\begin{figure}[!ht]
    \centering
    \begin{subfigure}{0.49\textwidth}
        \centering
        \includegraphics[scale=0.85]{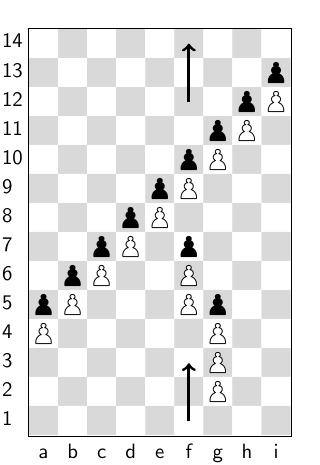}
        \caption{Pristine}
        \label{fig:pristine-white-pawn-break}
    \end{subfigure}
    \begin{subfigure}{0.49\textwidth}
        \centering
        \includegraphics[scale=0.85]{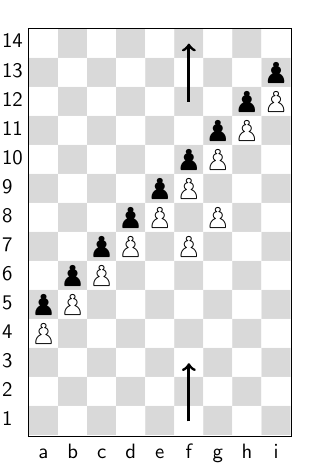}
        \caption{In use}
        \label{fig:in-use-white-pawn-break}
    \end{subfigure}
    \caption{White pawn crossing mechanism}
    \label{fig:pawn-breaks}
\end{figure}

\begin{lemma} \label{lem:crossing}
    If a white pawn enters the position in Figure~\ref{fig:pristine-white-pawn-break} from below along the $f$-file, then white can force a white pawn out of the top of the figure via the $f$-file.
\end{lemma}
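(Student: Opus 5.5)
The argument is a forcing-line analysis of the gadget in Figure~\ref{fig:pristine-white-pawn-break}, driven by the idea stated just before the lemma: the incoming pawn is used to release a horde of white pawns trapped below the pawn chain, and the horde then breaks through the chain. I will read off the pristine diagram which black pawn the entering $f$-pawn attacks or blockades. The first step is to show that White, by advancing this pawn and capturing whenever possible, unlocks a column or cluster of white pawns currently blockaded beneath the chain. Black's only local options are to capture the intruder or let it capture. The resulting position should be the one in Figure~\ref{fig:in-use-white-pawn-break}. I will check, as in the black-node lemma of Section~\ref{sec:bishop-tree-chess}, that every Black deviation either leaves White a free pawn to capture with or just loses tempo. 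Black has only pawns, moving downward, so a deviation can never rebuild a blockade above a white pawn once that pawn has passed.

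The second step is the breakthrough. With several mobile white pawns directly under the pawn chain that crosses the figure, White pushes them so that two or three attack the same black chain pawn, or adjacent ones. Black can capture at most one per turn, and every capture pulls a black pawn downward, below the chain, where it can no longer defend it. A counting argument should then give the result. Each black chain pawn near the crossing point is defended by at most one other black pawn, by the definition of a pawn chain, so a horde of three or more attackers wins the exchange. This leaves a white pawn on a square of the chain with an empty file above it. Re-routing this pawn onto the $f$-file above the chain should follow from the geometry of the figure: after the exchanges the pawn sits on the $e$-, $f$- or $g$-file, and a final diagonal capture of a designated black "guide" pawn puts it on $f$.

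Throughout, I will use the convention stated at the start of Section~\ref{sec:pawn-tree-chess} that moves outside the figure are local passes. I will also note that the break does not activate the crossed chain in a way that helps Black, since Black gains nothing from chain activation; Lemma~\ref{lem:pawn_chains} only ever benefits White.

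The main obstacle is the exchange count in the second step. Black may capture early, before the horde is assembled, to thin it out, or may pass locally and force White to commit first. I plan to handle this by checking that each black capture both removes a defender from the chain and frees a new white pawn. This is the same mechanism as in the proof of Lemma~\ref{lem:pawn_chains}, so the number of white attackers minus black defenders never decreases. I will verify this invariant square by square against Figure~\ref{fig:in-use-white-pawn-break}, including the case in which Black delays with a recapture on the $f$-file, handled as in Lemma~\ref{lem:normal_node}.
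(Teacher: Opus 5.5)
\paragraph{Verdict.} Your first phase matches the paper. The entering pawn advances to \texttt{f4} and forces the removal of Black's \texttt{g5} pawn, either on \texttt{g5} or on \texttt{f4}. This frees a column of white pawns on the g-file. Those pawns then force the removal of Black's \texttt{f7} pawn, either on \texttt{f7} or on \texttt{g6}. Since the white pawns originally on \texttt{f6} and \texttt{g2} stay on their files, White reaches Figure~\ref{fig:in-use-white-pawn-break}. The gap is in your second phase.

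\paragraph{Why the counting argument fails.} A pawn chain only gives way at its activated squares, where the mechanism of Lemma~\ref{lem:pawn_chains} operates. Everywhere else, each white pawn is blockaded by a black one. So White cannot simply ``push'' a horde until three pawns attack one chain pawn. A count of attackers minus defenders does not tell you where, or whether, the chain is breached, and in particular it does not give an exit on the f-file. Your ``guide pawn'' re-routing is a guess about the figure, and the proof does not need it. Moreover, phase one only guarantees you at least one white pawn on each of the f- and g-files, so a horde of three is not available in general.

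\paragraph{The missing idea.} Figure~\ref{fig:in-use-white-pawn-break} is itself a miniature normal binary node. It consists of two short chains merging at the single pawn on \texttt{f10}:
\begin{itemize}
\item \texttt{f8}--\texttt{e9}--\texttt{f10}, activated at \texttt{f8} by the white \texttt{f7} pawn;
\item \texttt{g9}--\texttt{f10}, activated at \texttt{g9} by the white \texttt{g8} pawn.
\end{itemize}
Black can defend \texttt{f10} only with \texttt{...gxf10}. The argument of Lemma~\ref{lem:normal_node} therefore applies verbatim, with early Black captures handled exactly as there:
\begin{enumerate}
\item push \texttt{f8};
\item get a pawn to \texttt{e9}, then a pawn to \texttt{g9};
\item move a pawn to \texttt{f10}, threatening \texttt{f11};
\item if Black plays \texttt{gxf10}, recapture, after which \texttt{f11} is unstoppable.
\end{enumerate}
The breakthrough square lies on the f-file by construction, so two attackers against one defender suffice and no re-routing is needed. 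You invoke Lemma~\ref{lem:normal_node} only for a delayed recapture; it should instead be your entire second step.
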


\begin{proof}

Having entered via {\tt f1}, White's new pawn will march unopposed to {\tt f4}, after which White can guarantee the capture of Black's {\tt g5} pawn on one of {\tt g5} or {\tt f4}. This leaves White with at least two free pawns on the g-file. By moving up the pawn column on the g-file, White can similarly force the capture of Black's {\tt f7} pawn on {\tt f7} or {\tt g6}. White has now removed all Black pawns below the pawn chain from the board, and is left with at least one pawn on the f-file and one pawn on the g-file (indeed the White pawns initially on {\tt f6} and {\tt g2} remain on their respective files in all cases). Thus, White can force the position of Figure~\ref{fig:in-use-white-pawn-break}, with perhaps some additional white pawns below the important pawns on {\tt f7} and {\tt g8}.

From Figure~\ref{fig:in-use-white-pawn-break}, the argument proceeds as in Lemma~\ref{lem:normal_node}. Indeed, the position can be understood as a normal node with very small pawn chains, one through {\tt f8}-{\tt e9}-{\tt f10} that is currently activated at f8, and another through {\tt g9} and {\tt f10} that is currently activated at {\tt g9}. Exactly as in Lemma~\ref{lem:normal_node} White wins by pushing {\tt f8}, moving a pawn to {\tt e9}, then moving a pawn to {\tt g9}. White now breaks through by moving some pawn to {\tt f10}, threatening {\tt f11}. Should black play {\tt gxf10}, White simply captures back on {\tt f10}, with {\tt f11} next move being unstoppable. The {\tt f11} pawn now may escape out of the top of the figure along the f-file.
\end{proof}

This concludes the introduction of most necessary components for the embedding of a \emph{choosing-nodes-from-$Z$} game $G$ into the infinite chessboard. We now give the recipe for constructing a chess position $p(G)$ embedding $G$, in the process introducing the mechanism by which we mark nodes as Z-nodes.

\begin{figure}[!ht]
    \centering
    \includegraphics[width=\linewidth]{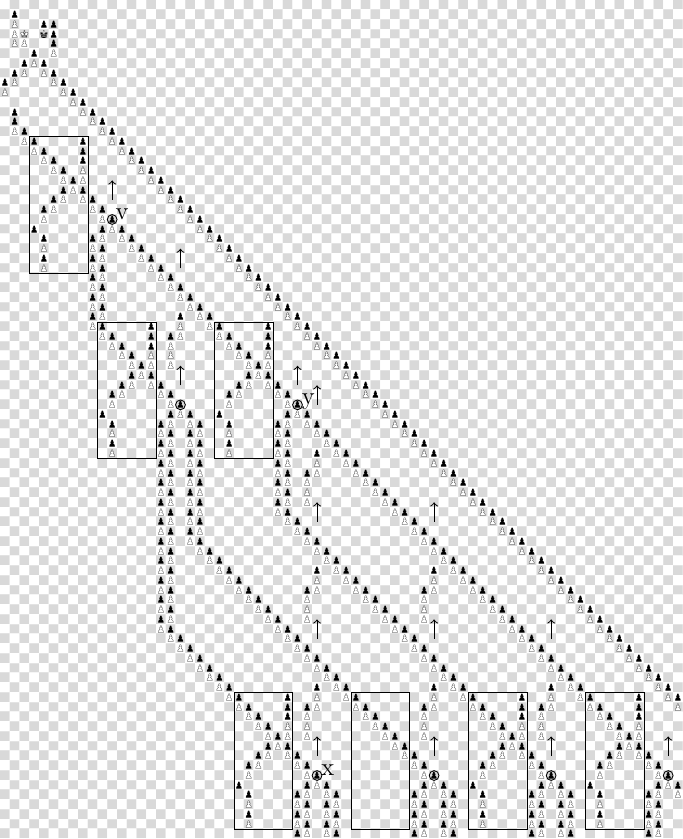}
    \caption{Pawn tree from the root node until nodes of depth $2$. Nodes are marked by circled blacked pawns. The second node of depth 2 from the left is a normal node and so has a normal pawn chain leaving it, while all others are Z-nodes and hence are equipped with a copy of figure~\ref{fig:Z-component-unused}, the Z-component.}
    \label{fig:pawn-tree}
\end{figure}

The position $p(G)$ will begin with Black to move. The global arrangement of the components on the chessboard is demonstrated in Figure~\ref{fig:pawn-tree}, which illustrates the placement of all nodes of depth 2 or less. The top of the position is given by the king chamber and game-deciding pawn chains. Below that, the full binary tree is built out of pawn chains. Each circled square belongs to a binary node as in Figure~\ref{fig:binary-node}, and the pawn chains from every node go all the way up to the root of the tree, which we have made a dead end by including an extra black pawn behind the base pawn to catch any freed white pawns. For every Z-node, the pawn chain leaving it towards the north is equipped with an extra component indicated by the regions in rectangles in the figure, which will be introduced below.

The tree is arranged so that every binary node is on a distinct file, there are only diagonal pawn chains above every node, and the pawn chains above each node are sufficiently spaced out. By doing this, all pawn chains above a node can be equipped with their own copy of the white pawn crossing mechanism introduced in Figure~\ref{fig:pristine-white-pawn-break}. In Figure~\ref{fig:pawn-tree} we illustrate one way of achieving this, in which nodes of higher depth are at least two files to the right of nodes of lower depth, and nodes of the same depth are placed at the same height, spaced out every 12th file. The flexibility of the pawn chains means that many such constructions are possible.

\begin{figure}[!ht]
    \centering
    \begin{subfigure}{0.45\textwidth}
        \centering
        \includegraphics[scale=0.8]{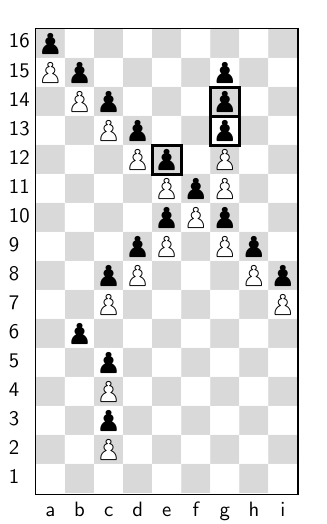}
        \caption{Pristine}
        \label{fig:Z-component-unused}
    \end{subfigure}
    \begin{subfigure}{0.45\textwidth}
        \centering
        \includegraphics[scale=0.8]{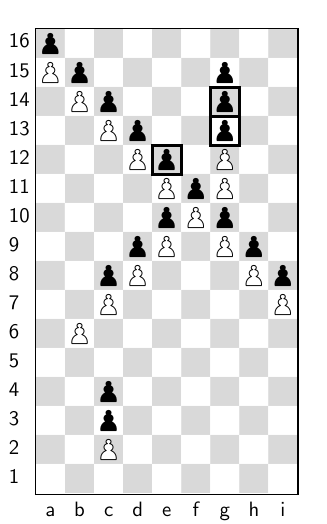}
        \caption[]{After {\tt 1...b5 2.\ cxb5 c4 3.\ b6}}
        \label{fig:Z-component-activated}
    \end{subfigure}
    \caption{The Z-component. A Black move here grants White an activated pawn chain and leaves Black with no local moves.}
    \label{fig:Z-component}
\end{figure}

As mentioned, we mark nodes as Z-nodes by including a copy of the mechanism of Figure~\ref{fig:Z-component-unused} on the edge immediately above the node we wish to mark (the boxed regions to the left of the nodes in Figure~\ref{fig:pawn-tree}). We call this mechanism a Z-component. Such a component essentially consists of two pawn chains that meet up on square {\tt f11} of Figure~\ref{fig:Z-component-unused} - a long pawn chain that is part of the tree and a very short local pawn chain coming from {\tt c8}, as depicted. However, in contrast to the binary nodes, this merger of two pawn chains is not a source of danger for Black, as there are extra black pawns on {\tt g13}, {\tt g14}, and {\tt g15}, which prevent any white pawn from escaping along the f-file. Note that Black has a move immediately available in every Z-component with his respective {\tt b6}-pawn but nowhere else, while White has no moves available anywhere in the position. We will see in the proofs of Lemmas \ref{lem:pawn-tree-sim1} and \ref{lem:pawn-tree-sim2} that Black will be in almost constant zugzwang throughout the chess game and that a Black move in a previously unused Z-component will act like a Black move in $G$. If Black ever loses the corresponding game $G$, White will be able to force checkmate in finitely many moves. We will also see that White cannot do better than this.

We shall now analyze the play in such a position. Due to the requirement that the set $Z$ be infinite, there are infinitely many Z-components and the game can never end with Black being stalemated. The fifty-move rule and threefold repetition also are irrelevant as both players make only pawn moves. As a result, the game can only be ended by checkmate or White being stalemated.

We first argue that the game value of $p(G)$ is not less than the game value of $G$.
\begin{lemma}
\label{lem:pawn-tree-sim1}
Suppose $G$ is a game of choosing-nodes-from-Z of game value $\alpha$.
If White is winning in $p(G)$, then the game value of $p(G)$ is at least $\alpha$.
\end{lemma}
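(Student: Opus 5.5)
We have to show that Black can delay mate in $p(G)$ at least as long as he can delay losing in $G$. The plan is to give Black a strategy in $p(G)$ that shadows an optimal delaying line in $G$, and then to argue by transfinite induction on game values that the chess game value dominates the $G$ game value at corresponding positions.

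\textbf{The strategy for Black.} Black keeps every pawn chain intact except those activated by his own Z-component moves. Whenever it is his turn in $p(G)$, he does the following.
\begin{enumerate}[(a)]
\item If White has just pushed in an activated chain, Black captures whenever capturing is possible, as in Lemma~\ref{lem:pawn_chains}. This ensures that White can free pawns only on the files of base pawns.
\item If an unfinished forced local reply is pending in a Z-component he has opened (the sequence {\tt 1...b5 2.\ cxb5 c4}), Black completes it.
\item Otherwise Black is in zugzwang. He then makes a fresh {\tt b5} move in the Z-component of some node $z\in Z$ not chosen before. He picks $z$ exactly as he would pick a move in $G$ that realises the game value there, i.e.\ a move to a play of game value $\alpha_m$ with $\sup_m \alpha_m$ equal to the current value.
\end{enumerate}

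\textbf{The invariant.} As long as the $G$-position is not yet lost, all nodes chosen so far lie on a single branch. The key invariant is that White can then free a pawn only along that branch. Each activated Z-component produces a white pawn entering the tree chain above $z$. This happens at {\tt f11} of Figure~\ref{fig:Z-component-unused}, where the pawns on {\tt g13}--{\tt g15} block the f-file escape. By Lemma~\ref{lem:pawn_chains}, White can then push a pawn up through the tree chain toward the root.

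When such a pawn reaches a binary node along the branch, only one of that node's two incoming chains is activated, since all chosen nodes lie on one branch. A single activated chain does not let White escape the node upward along its f-file; only the situation of Lemma~\ref{lem:normal_node}, with both incoming chains activated, does. Finally, the root is a dead end, and the extra black pawn catches freed white pawns. So White cannot reach the diagonals below the king-chamber chains, and by Lemma~\ref{lem:bounded_king_chamber} White cannot win while $G$ is not lost.

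\textbf{The induction.} Each of Black's $G$-moves costs White at least one chess move, namely the reply {\tt cxb5}. We then argue by transfinite induction on the game value of $G$-positions reachable under the strategy above. The claim is that at every chess position where Black is about to make a fresh Z-choice, corresponding to a $G$-position of value $\gamma$, the chess game value is at least $\gamma$.
\begin{enumerate}[(a)]
\item \emph{Successor case.} Black's move to a $G$-position of value $\gamma-1$ takes at least one full move pair, so the chess value is at least $(\gamma-1)+1$.
\item \emph{Limit case.} Black can choose among children of values $\alpha_m$ with supremum $\gamma$, so the supremum carries over to the chess position.
\end{enumerate}
We need one monotonicity fact here: the value of a Black-to-move chess position is at least the value reached after any Black move followed by White's best reply. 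This holds by the definition of game values, since the value at a Black-to-move node is a supremum over Black's moves, and therefore dominates each term.

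\textbf{The main obstacle.} The hard step is the invariant, specifically ruling out that White can exploit an interleaving we have not considered. The cases to exclude are:
\begin{enumerate}[(a)]
\item White freeing pawns from the side chains of a Z-component that are not part of the tree;
\item White delaying his moves so that Black runs out of zugzwang-free moves inside already-opened components, forcing Black into a $G$-losing choice earlier than necessary.
\end{enumerate}
I would handle (b) by noting that after {\tt 3.\ b6}, Figure~\ref{fig:Z-component-activated} leaves Black with no local moves. So Black's only choices are fresh Z-nodes, and $Z$ is infinite, so he always has one. The number of fresh choices Black is forced to make is therefore exactly the number of his turns not spent on forced captures.

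Since White is assumed to be winning, the $G$-choice Black makes on each forced turn can still be taken value-optimally. The remaining worry is that White gains tempo, not lose it, by waiting, since waiting forces extra Black choices. I would address this as follows. In $G$ a Black choice never increases the value, but an optimal choice loses at most the successor step $+1$. So even forced extra choices consume at most one level of the game value apiece, and the induction above still yields chess value at least $\alpha$.
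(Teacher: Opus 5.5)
\textbf{The gap is in your invariant.} While Black has not lost $G$, his chosen nodes are pairwise comparable. So if $z_1$ is a chosen ancestor of a chosen $z_2$, the tree chain activated from $z_2$ runs up through $z_1$ into the Z-component on the edge above $z_1$. There $z_1$'s own short chain from \texttt{c8} is also activated. Hence the Z-components, not just the binary nodes, are points where two activated chains meet. This is the configuration of Lemma~\ref{lem:normal_node}, and it occurs at every chosen node except the deepest. The paper's proof is built around exactly this: ``any two activated pawn chains of the tree meet in the Z-component where the shorter pawn chain originates.'' You check only binary nodes, where indeed just one input is active. You treat the Z-component as if only its local chain reached \texttt{f11}, so the claim ``White cannot win while $G$ is not lost'' is unsupported at the one place where it is actually in doubt.

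Whether the extra pawns on \texttt{g13}--\texttt{g15} really stop an f-file breakout depends on how Black captures. Your rule (a), ``capture whenever possible'', would have Black capture with the very pawns that form this blockade. The paper's defence captures whenever possible \emph{except} with the pawns on \texttt{e12}, \texttt{g13}, \texttt{g14} of a Z-component. It then verifies several things:
\begin{itemize}
\item Black eventually gets \texttt{fxe10} or \texttt{fxg10}.
\item The other chain's pawn gets stuck on \texttt{g10} or \texttt{e10}.
\item A pawn sent up the f-file gets stuck on \texttt{g13}/\texttt{g14} or is taken on \texttt{f14}.
\item No new chain is activated.
\item Black needs at most five tempo moves per component.
\end{itemize}
Some analysis of this kind, with an amended capture rule, is the real content of the lemma, and your proposal omits it.

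Once that is supplied, your route does differ genuinely from the paper's. The paper shows that any White deviation (\texttt{b7} in a used component) before $G$ is lost gives Black an outright draw by stalemating White, so White must mirror $G$ exactly. You instead keep Black choosing in $G$ and use fresh Z-choices as tempo moves. This spares you the paper's accounting of Black's spare waiting moves (the \texttt{b6} after \texttt{cxb7}, and released pawns). The cost is that your merger analysis must survive new chains being activated during White's climb, whereas in the paper the activated chains are frozen after the deviation.

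Two smaller points. Your obstacle (b) is misdiagnosed: White has no spare moves at all, and every off-main-line White move is a push in an activated chain. The claim that an optimal choice ``loses at most one level'' is also false at limit values. What actually makes the count work is that every Black move, $G$-choice or not, is answered by exactly one White move. With that, a minimal-counterexample induction on the chess value gives the bound; this value exists because White is assumed to be winning.
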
 
\begin{proof}
We suppose that Black begins the game by playing $G$ and $p(G)$ simultaneously, trying to play the sequence {\tt 1...b5 2.\ cxb5 c4 3.\ b6} in the copy of Figure~\ref{fig:Z-component-unused} corresponding to his chosen move in $G$ (So every Black move in $G$ corresponds to two on the chessboard). To win the chess game, White eventually must deviate from this sequence in some Z-component, for otherwise play would continue indefinitely. As White has no other moves in the current Z-component and no moves outside of previously used Z-components, White's only option for such a deviation is to play {\tt b7} in a Z-component where the exchange {\tt 1...b5 2.\ cxb5 c4 3.\ b6} has already occurred, shown in Figure~\ref{fig:Z-component-activated}. Suppose White does this before Black has lost $G$. We argue that Black now has a drawing strategy, and thus the game value of $p(G)$ (if it exists) is at least $\alpha$. 

Black's defense will be given by a simple algorithm. Black will immediately respond to White's {\tt b7} with {\tt cxb7}, which importantly makes the waiting move of {\tt b6} immediately available to him. Black now announces that he will capture a White pawn every turn if possible, unless that capture would be with a pawn on one of the highlighted squares {\tt e12}, {\tt g13}, and {\tt g14} in a Z-component. If White makes a move Black cannot or will not capture, Black will instead make a waiting move by pushing some pawn that White cannot immediately capture. We argue that Black is always able to follow this strategy, and that by doing so Black will eventually stalemate White.

Since Black has not yet lost the associated \emph{choosing-nodes-from-$Z$} game, any two activated pawn chains of the tree meet in the Z-component where the shorter pawn chain originates. In this component, Black's strategy ensures that he will eventually get to play {\tt fxe10} or {\tt fxg10}, depending on which direction White approaches from. In the former case, the lower right pawn chain (if activated) now can technically free a pawn along the g-file, but it immediately gets stuck at {\tt g10}. In the latter case, the {\tt e11} pawn will never move under Black's strategy, so while the lower left pawn chain (if activated) can free a pawn on the e-file it will similarly get stuck at {\tt e10}. In both cases White can choose to play {\tt f11} followed eventually by the exchange {\tt fxe12 dxe12}, activating the upper left pawn chain and continuing up the tree, but this is a pawn chain which was already activated before the start of this sequence anyways. If White plays {\tt f12} instead of {\tt fxe12}, the newly freed pawn will eventually get stuck at {\tt g13} or {\tt g14}, or be captured at {\tt f14}. All in all, at each Z-component White must choose a pawn chain to make a dead end, cannot permanently free any pawns, cannot activate any new pawn chains, and can force Black to play at most 5 waiting moves (to respond to {\tt e10} or {\tt g10} and up to 4 moves with the f pawn).

Note that the black waiting move {\tt b6} White made available with the initial deviation is enough to pass the turn back to White if he were to continue the sequence in the Z-component where he initially ignored Black. The only other times Black needs a waiting move available are when White plays a move that cannot be answered with a capture in a Z-component or at the root of the entire tree, but these come only after White has released a horde of Black pawns by climbing up a pawn chain to reach the relevant node.

White's prospects are therefore grim. Any pawn chain he plays in will slowly climb up the tree until running into the dead-end root node or into another pawn chain at a Z-component. As Black is able to wait out any White moves that are not in pawn chains, White is forced to continue playing in pawn chains until all of the finitely many pawn chains of finite length Black had activated are exhausted. At this point, White is left with no moves and is stalemated. 

White therefore cannot win before Black loses $G$, meaning the game value of this position must be at least $\alpha$ if it exists.

\end{proof}

It remains to show that $p(G)$ is winning for White if $G$ is winning for White, so that the game value of $p(G)$ exists.

\begin{lemma}
\label{lem:pawn-tree-sim2}
    Suppose $G$ is a game of choosing-nodes-from-Z of game value $\alpha$, so in particular Black is losing in $G$. Then Black is losing in $p(G)$ and so $p(G)$ has a game value.
\end{lemma}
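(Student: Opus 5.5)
We give White an explicit strategy in $p(G)$ and show it forces checkmate in finitely many moves. White answers every Black move in a pristine Z-component, {\tt ...b5}, with {\tt cxb5}, and after {\tt ...c4} he plays {\tt b6}, so that the position of Figure~\ref{fig:Z-component-activated} arises. We then check that this leaves Black with no local moves there, while White keeps a reserve move {\tt b7} in that component.

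The first step is to show that Black's only legal moves in $p(G)$ are in pristine Z-components, with two exceptions. The first exception is a reply to something White has just done. The second is a move in a component White has already opened up, for instance {\tt ...c4} or a capture. In the main line, used Z-components, binary nodes, crossing mechanisms, chains and the king chamber are all deadlocked for Black. The few Black deviations inside a Z-component, such as declining {\tt c4} or moving in a different order, either immediately give White an activated chain or give White extra tempo; I would check each as in the earlier lemmas. So every Black \emph{choice} amounts to naming a node of $Z$ not named before, and the play is a run of $G$. Because Black loses $G$ and $G$ is open with a game value, every such run reaches, after finitely many choices, two named Z-nodes $u,v$ with neither an initial segment of the other.

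The second step is the win once that happens. After {\tt b6} in the components at $u$ and $v$, White plays {\tt b7}, which activates the pawn chain of the tree leaving each of these nodes. By Lemma~\ref{lem:pawn_chains}, White climbs both chains, possibly interleaving moves, toward the binary node at the meet $w$ of $u$ and $v$. Since $u$ and $v$ are incomparable, they reach $w$ through different children, so both chains into $w$ are activated below it. Along the way each chain passes the Z-components of intermediate nodes; I must argue that a chain arriving from below passes through such a component unharmed when its short local chain is unused, or has been activated by Black's earlier choice in the same direction. The chain also crosses other chains via the crossing mechanisms, and Lemma~\ref{lem:crossing} lets it continue. Lemma~\ref{lem:normal_node} then frees a white pawn on the f-file of $w$. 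From there, repeated use of Lemma~\ref{lem:crossing} and of the spacing, diagonal-chain layout of Figure~\ref{fig:pawn-tree} lets that pawn march to the diagonal below the game-deciding chains. Lemma~\ref{lem:bounded_king_chamber} then gives a forced win. Throughout, Black's moves are only downward pawn moves and cannot interfere; any extra Black waiting moves can only be made in pristine Z-components, and each counts as a further choice in $G$, which cannot hurt White.

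The main obstacle is the passage of an activated chain through intermediate Z-components. This is the same mechanism that, in Lemma~\ref{lem:pawn-tree-sim1}, let Black make two chains dead-end each other. Here we must show that this mechanism does not block a single chain, and that for comparable choices along one branch Black gains nothing. I would handle it by a local case analysis of Figure~\ref{fig:Z-component} with one chain arriving from below. Finally, finiteness: the number of Black choices is bounded along every run by well-foundedness, and each phase lasts finitely many moves, so White forces mate in finite time. Hence Black is losing in $p(G)$, and $p(G)$ has a game value.
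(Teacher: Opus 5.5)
There is a genuine gap in your second phase. You never say how White answers Black's moves once $G$ has been lost, and your finiteness argument does not cover that stage. Well-foundedness of $G$ bounds Black's choices only until two incomparable Z-nodes have been named. The chess game goes on after that. Since $Z$ is infinite, Black can play {\tt ...b5} in a fresh pristine Z-component on every move; these are no longer ``choices in $G$'', which has already ended. If White keeps replying {\tt cxb5} as in phase one, he never gets a free tempo for climbing, and the game is drawn by infinite play.

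If White instead ignores these moves, ``Black's moves are only downward pawn moves'' does not make them harmless. A neglected Z-component above White's route releases black pawns that descend onto the chains and crossing mechanisms White still needs. This covers components on the paths from $u$ and $v$ to $w$, and those on the freed pawn's way from $w$ to the king chamber.

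The missing idea, which is what the paper uses, is a split policy. White keeps answering Black's moves in Z-components north of $u$ or $v$. There are only finitely many such components, because nodes at bounded height have bounded depth. White ignores every move south of both $u$ and $v$. Black exhausts the northern supply after finitely many moves and is then forced into moves White can ignore, which supplies White's tempi. The ignored moves release pawns only below the whole region White uses.

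The same downward-interference point affects your climb to $w$. You let White interleave moves in the two chains arbitrarily. But black pawns released in the wake of whichever chain is currently higher can fall onto the other chain. The paper avoids this by always advancing the currently lower chain, as inside Lemma~\ref{lem:normal_node}.

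By contrast, the passage through intermediate Z-components, which you single out as the main obstacle, is treated as routine in the paper. The merging analysis in the proof of Lemma~\ref{lem:pawn-tree-sim1} already shows that White can continue up the tree where two activated chains meet. The step that actually needs a new idea is controlling Black's tempo and pawn releases after $G$ has been won.
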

\begin{proof}
We exhibit a winning strategy for White.

Until White has won $G$, White follows the simple strategy of responding to {\tt b5} in any copy of Figure~\ref{fig:Z-component-unused} with {\tt cxb5}. Should Black follow up with {\tt c4}, White may play {\tt b6}, leaving Black with no more moves in that Z-component. This guarantees that White is never stalemated before White has won $G$, White gains an activated pawn chain to the root coming from each Z-component Black plays in, and that Black may only play up to two moves in each Z-component before he must play {\tt b5} in a new copy.

As Black must keep using new Z-components, White may continue this strategy until Black has lost $G$. Once this occurs, there exist two activated Z-components belonging to nodes $x$ and $y$ on the chessboard such that neither node is an initial segment of the other. Let $v$ be the lowest common ancestor node of $x$ and $y$ on the chessboard. As an illustrative example, certain nodes in Figure~\ref{fig:pawn-tree} are labeled as $x$, $y$, and $v$ respectively.

White can now win with the following strategy: White locally answers any Black pawn moves in Z-components north of $x$ or $y$ as before, in order to prevent any black pawn from being released and interfering with White's path to victory. Since there are only finitely many nodes on the board north of $x$ and $y$, Black will eventually run out of these moves and will be forced to move pawns at coordinates south of both $x$ and $y$ on the board (or at equal height), which White will now ignore. 

When not responding to moves in Z-components north of $x$ or $y$, White makes progress by slowly advancing up the pawn chains from nodes $x,y$ to $v$ in tandem, taking great care to only make moves in the currently lower of the two pawn chains. This precaution is because moving up a pawn chain releases a horde of black pawns in its wake, and White wants to prevent the progress in the two pawn chains from interfering with each other. After a finite number of moves, both pawn chains will have reached node $v$, where White can force the vertical release of a white pawn by Lemma~\ref{lem:normal_node}. This white pawn will now fight its way through the pristine pawn chains above it by using the crossing mechanism of Lemma~\ref{lem:crossing} at every chain until it hits the game-deciding pawn chain of the king chamber, forcing victory by Lemma~\ref{lem:bounded_king_chamber}.
\end{proof}

We are now ready to conclude the proof of Theorem~\ref{thm:game-values-pawns}:

\begin{proof}[Proof of Theorem~\ref{thm:game-values-pawns}]
    By Lemma~\ref{lem:choosing-nodes-game}, for every countable ordinal $\beta$ there is a game $G$ of \emph{choosing-nodes-from-Z} having countable game value at least $\beta$. By Lemma~\ref{lem:pawn-tree-sim2}, the associated chess position $p(G)$ has a game value for White, which by Lemma~\ref{lem:pawn-tree-sim1} is at least $\beta$. By a simple induction, a position with game value exactly $\beta$ can be reached after finite play from position $p(G)$, and so all countable game values arise among positions with only kings and pawns.
\end{proof}

Thus, the omega one of infinite chess with only kings and pawns is $\omega_1$ as well.

Though this construction is enough to answer the game value question, we note that it brings an inherent asymmetry between the players. In particular, it cannot simulate any game where White has meaningful choices. Therefore, we do not obtain a full analog of Theorem~\ref{thm:main} and are left with the following question:
\begin{question}
Can short-range infinite chess simulate arbitrary Gale-Stewart games with draws in the sense of Theorem~\ref{thm:main}?
\end{question}

\newpage

\bibliographystyle{plain}
\bibliography{Sources}

\end{document}